\patchcmd{\subsection}{-.5em}{.5em}{}{}
\patchcmd{\subsubsection}{-.5em}{.5em}{}{}
\numberwithin{equation}{section}
\newcommand{\SL}{\operatorname{SL}}
\newcommand{\cA}{\mathcal{A}}
\newcommand{\cE}{\mathcal{E}}
\newcommand{\cF}{\mathcal{F}}
\newcommand{\cH}{\mathcal{H}}
\newcommand{\cP}{\mathcal{P}}
\newcommand{\bC}{\mathbb{C}}
\newcommand{\bE}{\mathbb{E}}
\newcommand{\bN}{\mathbb{N}}
\newcommand{\bR}{\mathbb{R}}
\newcommand{\bT}{\mathbb{T}}
\newcommand{\bZ}{\mathbb{Z}}
\newcommand{\ra}{\rightarrow}
\newcommand{\qand}{\quad \textrm{and} \quad}
\def\acts{\curvearrowright}
\newcommand\subsetsim{\mathrel{%
\ooalign{\raise0.2ex\hbox{$\subset$}\cr\hidewidth\raise-0.8ex\hbox{\scalebox{0.9}{$\sim$}}\hidewidth\cr}}}
\newcommand{\eps}{\varepsilon}
\DeclareMathOperator{\Hom}{Hom}
\DeclareMathOperator{\linspan}{span}
\DeclareMathOperator{\Prob}{Prob}
\DeclareMathOperator{\rat}{Rat}
\DeclareMathOperator{\Vol}{Vol}
\renewcommand{\phi}{\varphi}
\definecolor{lichtgrijs}{gray}{0.95}
\theoremstyle{theorem}
\newtheorem{theorem}{Theorem}[section]
\newtheorem{corollary}[theorem]{Corollary}
\newtheorem{proposition}[theorem]{Proposition}
\newtheorem{lemma}[theorem]{Lemma}
\theoremstyle{task}
\theoremstyle{definition}
\newtheorem{definition}[theorem]{Definition}
\newtheorem{remark}[theorem]{Remark}
\newtheorem{example}[theorem]{Example}
\DeclareMathOperator{\VS}{VolSpec}
\DeclareMathSymbol{\shortminus}{\mathbin}{AMSa}{"39}
\begin{document}
\bibliographystyle{plain} 
 
\title[Directional expansion in ergodic actions]{Simplices in large sets and directional expansion in ergodic actions}

\author{Michael Bj\"orklund}
\address{Department of Mathematics, Chalmers, Gothenburg, Sweden}
\email{micbjo@chalmers.se}
\thanks{}

\author{Alexander Fish}
\address{School of Mathematics and Statistics F07, University of Sydney, NSW 2006,
Australia}
\curraddr{}
\email{alexander.fish@sydney.edu.au}
\thanks{}

\subjclass[2020]{Primary: 37A30, 37A44 . Secondary: 11B30}
\keywords{Directional ergodicity, expansion, volume of simplices}

\begin{abstract} 
In this paper we study ergodic $\bZ^r$-actions and investigate expansion properties along cyclic subgroups. We show that under some spectral conditions there are always
directions which expand significantly a given measurable set with positive measure. 
Among other things, we use this result to prove that the set of volumes of all $r$-simplices with vertices in a set with positive upper density must contain an infinite arithmetic progression, thus showing a discrete density analogue of a classical result by Graham.
\end{abstract}
\maketitle

\section{Introduction}

The \emph{upper density} $\overline{d}(E)$ of a set $E \subset \bZ^r$ is defined by
\[
\overline{d}(E) = \limsup_{N \ra \infty} \frac{|E \cap [-N,N]^r]}{(2N+1)^r},
\]
and we say that $E$ is \emph{large} if $\overline{d}(E) > 0$. Let $\mathscr{F}_r$ represent all finite subsets of $\bZ^r$, and let $\mathscr{M}_r$ denote  
all maps $\varphi : \bR^r \ra \bR^r$. We think of $\bZ^r$ as the standard unimodular lattice in $\bR^r$. Let $\mathscr{A} \subset \mathscr{F}_r$ and $\mathscr{B} \subset \mathscr{M}_r$ be two sets. The pair $(\mathscr{A},\mathscr{B})$ is called a \emph{density pattern matching} if, for every large set $E \subset \bZ^r$ and $F \in \mathscr{A}$, the set
\[
\mathscr{B}_{F}(E) := \{ \varphi \in \mathscr{B} \, : \, \varphi(F) \subset E\}
\]
is non-empty. In simpler terms, it means that every finite pattern in $\mathscr{A}$ can be transformed into any large set by a function in $\mathscr{B}$. Two fundamental questions in Density Ramsey Theory arise:
\begin{itemize}
\item[$\bullet$] Which pairs $(\mathscr{A},\mathscr{B})$ constitute density pattern 
matchings?
\item[$\bullet$] If $(\mathscr{A},\mathscr{B})$ is a density pattern matching and $E \subset \bZ^r$ is a large set, 
how "large" is the set $\mathscr{B}_{F}(E)$ for a given $F \in \mathscr{A}$? 
\end{itemize}
Typically the elements in $\mathscr{B}$ depend on some parameters, so when we refer to "largeness" of $\mathscr{B}_F(E)$ it will be with respect to these parameters. \\

An archetypical result in this context is the theorem of Furstenberg and Katznelson \cite{FK}, which extends an earlier breakthrough of Szemeredi \cite{Sz}, and asserts that
\[
\mathscr{A} = \mathscr{F}_r \qand \mathscr{B} = \{ \varphi_{a,b}(v) = av + b \, : \, a \in \bZ \setminus \{0\}, \enskip b \in \bZ^r\}
\]
forms a pattern density matching. In fact, they show that for every large set $E \subset \bZ^r$ and finite subset $F \subset \bZ^r$, there exist $A \subset \bZ \setminus {0}$ and $B \subset \bZ^r$ with positive densities such that
\[
\mathscr{B}_F(E) \supset \{ \varphi_{a,b} \, : \, a \in A, \enskip b \in B \}.
\]
Numerous extensions of this fundamental result have been explored over the years. \\

More recently, Magyar \cite{M} has introduced a captivating class of density pattern matchings. To illustrate these examples, consider $p \geq 2$ and assume $r > 2p + 4$. Let $\cA$ denote the set of all subsets comprising $p+1$ affinely independent vectors in $\bZ^r$. For $m = (m_1,m_2) \in \bN^2$, a rotation $u \in O(r)$, and $b \in \bR^r$, define
\[
\varphi_{m,u,b}(v) = m_1 \sqrt{m_2} u(v) + b, \quad v \in \bR^r,
\]
Now, consider the set $\mathscr{B}$, which consists of all maps $\varphi_{m,u,b}$ with parameters $(m,u,b)$ as described above. Magyar's theorem \cite[Theorem 1.1]{M} establishes that $(\mathscr{A},\mathscr{B})$ forms a density pattern matching, and for every large set $E$ and $F \in \mathscr{A}$, there are an integer $m_1$ (depending solely on the density of $E$) and an integer $n$ such that for all 
integers $m_2 \geq n$, there exist a rotation $u \in O(r)$ and a vector $b \in \bR^r$ such that $\varphi_{m,u,b}(F) \subset E$. Several generalizations of this result have been proven, see for instance \cite{LM}. It is worth noting that these results typically demand a substantial dimensionality, with $r$ being significantly larger than the size of the patterns one aims to map into large sets. For instance, in the aforementioned result, at least 9 dimensions are required to map any three affinely independent vectors into any large set. A rich body of literature is dedicated to analogous embedding problems in Euclidean spaces, exemplified by \cite{Bo,LM2,Z}.

A different exploration of this theme was undertaken by the authors and Bulinski in a series of papers \cite{BB,BF1,F}. Here, we briefly highlight a key combinatorial result in \cite{BB}. Let $\Gamma$ be a "sufficiently large" subgroup of $\SL_r(\bZ)$. For every large set $E \subset \bZ^r$ and $p \geq 1$, there is an integer $n$ such that, for every finite set $F = \{v_o,\ldots,v_p\} \subset n \cdot \bZ^r$, there are $\gamma_o,\ldots,\gamma_p \in \Gamma$ and $b \in E$ such that
\begin{equation}
\label{bfkincl}
\{\gamma_o(v_o) + b,\ldots,\gamma_p (v_p) + b\} \subset E.
\end{equation}
This result differs from the theorems of Furstenberg and Katznelson and Magyar in several aspects. Notably, it does not involve mapping elements in the finite set $F$ into $E$ by the same function. Additionally, only finite subsets of $n \cdot \bZ^r$ can be mapped into the set $E$, where the integer $n$ only depends on $|F|$ and the set $E$. This constraint is crucial; for instance, if $E = n_o \cdot \bZ^r$ for some $n_o \geq 2$ and $v \in \bZ^r$ has relatively prime coordinates, then the set $F = \{0,v\}$ cannot be mapped into $E$ as described in \eqref{bfkincl}. On the other hand, the dilation $n$ only depends on $E$ and not the finite set $F$, as in the theorem of Furstenberg and Katznelson. 

\subsection{Main combinatorial result}

Our first theorem in this paper can be seen as an amalgamation of the last two results mentioned above. Recall that a vector $\lambda = (k_1,\ldots,k_r) \in \bZ^r$ is \emph{primitive} if $\gcd(k_1,\ldots,k_r) = 1$.

\begin{theorem}
\label{ThmIntoComb}
For every large set $E \subset \bZ^r$ and $p \geq 2$, there are positive integers $n$
and $m_1$ and a primitive vector $\lambda \in \bZ^r$ such that for all $\lambda_2,\ldots,\lambda_p \in \bZ^r$, there are $m_2,\ldots,m_p \in \bZ \setminus \{0\}$ and $\lambda_o \in E$ such that
\[
\lambda_o + m_1 n \lambda \in E, \enskip \lambda_o + m_2 n \lambda + n \lambda_2 \in E, \ldots \lambda_o + m_p n \lambda + n \lambda_p \in E.
\]
\end{theorem}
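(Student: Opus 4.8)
The plan is to recast the statement dynamically via the Furstenberg correspondence principle and then to reduce it, through one ``directional expansion'' input, to an elementary recurrence argument.

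First I would apply the ergodic form of Furstenberg's correspondence principle to the large set $E$: this produces an ergodic probability measure preserving $\bZ^r$-system $(X,\mu,(T^v)_{v\in\bZ^r})$ together with a measurable set $A\subset X$, $\mu(A)=\overline d(E)>0$, such that for all $v_1,\dots,v_k\in\bZ^r$,
\[
\mu\bigl(A\cap T^{v_1}A\cap\cdots\cap T^{v_k}A\bigr)>0\ \Longrightarrow\ E\cap(E-v_1)\cap\cdots\cap(E-v_k)\neq\varnothing ,
\]
a witness $\lambda_o$ to the right-hand side then satisfying $\lambda_o\in E$, $\lambda_o+v_i\in E$. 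Specializing $v_1=m_1 n\lambda$ and $v_j=m_j n\lambda+n\lambda_j$ for $j\ge2$, it therefore suffices to find positive integers $n,m_1$ and a primitive $\lambda$ so that for every $\lambda_2,\dots,\lambda_p$ there are nonzero $m_2,\dots,m_p$ with $\mu\bigl(A\cap T^{m_1 n\lambda}A\cap\bigcap_{j=2}^{p}T^{m_j n\lambda+n\lambda_j}A\bigr)>0$.

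The heart of the argument is to choose $n$ and $\lambda$ so that the \emph{single} transformation $R:=T^{n\lambda}$ spreads $A$ across the space: for every positive-measure set $C$ (in particular for the sets arising below) and every $N$ there should exist $m$ with $|m|\ge N$ and $\mu(R^m A\cap C)>0$, indeed with $\mu(R^m A\cap C)$ bounded below in terms of $\mu(A)$ and $\mu(C)$. The delicate point is that no cyclic direction of a $\bZ^r$-action need be ergodic, so this cannot be secured for a fixed small $\lambda$; the two free parameters resolve this. Working through the maximal compact group (Kronecker) factor $K$ with structural homomorphism $\rho:\bZ^r\to K$, one first chooses $n$ divisible by the appropriate moduli so that the rational (finite-order) spectral obstructions of the $n\bZ^r$-subaction are neutralized --- this is precisely the role the dilation $n$ plays, exactly as in the Bj\"orklund--Bulinski theorem --- and then takes $\lambda$ to be a primitive vector of sufficiently large norm so that the cyclic subgroup generated by $\rho(n\lambda)$ is $\varepsilon$-dense in the Kronecker factor of the $n\bZ^r$-subaction, the required $\varepsilon$ being dictated by $A$ (equivalently by $\overline d(E)$); the relatively weakly mixing complement of the Kronecker factor is unaffected by passing to powers of $T^{n\lambda}$ and contributes no further obstruction. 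Establishing this spreading statement --- the directional expansion phenomenon of the title --- is where the paper's spectral hypotheses enter.

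Granting this, I would assemble the configuration greedily. By Poincar\'e recurrence for $R$, pick $m_1\ge1$ with $\mu(A\cap R^{m_1}A)>0$ and set $C_{p+1}:=A\cap R^{m_1}A$. Now fix $\lambda_2,\dots,\lambda_p$ and process $j=p,p-1,\dots,2$ in turn: given $\mu(C_{j+1})>0$, use that $T^{n\lambda_j}$ commutes with $R$ and preserves $\mu$ to rewrite $\mu\bigl(R^m T^{n\lambda_j}A\cap C_{j+1}\bigr)=\mu\bigl(R^m A\cap T^{-n\lambda_j}C_{j+1}\bigr)$, and apply the spreading property of the previous step to the positive-measure set $T^{-n\lambda_j}C_{j+1}$ to choose $m_j$ with $|m_j|$ large --- in particular $m_j\neq0$ --- for which this quantity is positive; set $C_j:=R^{m_j}T^{n\lambda_j}A\cap C_{j+1}$. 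After $p-1$ steps $C_2=A\cap R^{m_1}A\cap\bigcap_{j=2}^{p}R^{m_j}T^{n\lambda_j}A$ has positive measure, which is exactly the inequality required above. The only real obstacle is the middle step: one must make the choice of $n$ and $\lambda$ \emph{uniform} --- fixed before the adversarial vectors $\lambda_2,\dots,\lambda_p$ --- while the spreading must hold against an essentially arbitrary positive-measure target, which forces simultaneous control of the rational and the continuous parts of the spectrum and a check that the relatively weakly mixing complement does not interfere. The correspondence principle, Poincar\'e recurrence, and the inductive peeling are routine by comparison.
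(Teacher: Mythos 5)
Your overall architecture (correspondence principle followed by a single ``directional expansion'' input obtained by dilating and controlling the rational spectrum) matches the paper's, but the combinatorial assembly has a genuine gap, which you flag yourself and do not close. Your greedy induction requires that the single fixed $R=T^{n\lambda}$ satisfy $\mu(R^mA\cap C)>0$ for \emph{every} positive-measure target $C$, with a lower bound in terms of $\mu(A)$ and $\mu(C)$; that is essentially ergodicity of $R$ plus a uniform correlation estimate, and neither is available. What directional expansion actually delivers (Theorem \ref{ThmMainErg}) is $\nu\bigl(\bigcup_{m\in S}R^m B\bigr)>1-\eps$ for some fixed $\eps>0$ that cannot in general be taken to be $0$: the paper's examples show that no direction need act ergodically and that sets need not be directionally expandable at all without a rational-spectrum hypothesis. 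Consequently your induction breaks as soon as some $C_{j+1}$ has measure below $\eps$, and since $\lambda_2,\dots,\lambda_p$ are adversarial and chosen \emph{after} $n,\lambda,m_1$, there is no uniform lower bound on $\mu(C_{j+1})$ that would let you fix $\eps$ in advance.

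The paper's resolution is to avoid greedy peeling altogether. After securing $m_1$ with $\nu(B\cap R^{m_1}B)>\nu(B)^2/2$, it does \emph{not} choose $m_2,\dots,m_p$ one at a time; it intersects the full orbit-unions $B_k=(S\lambda_\eps+n\lambda_k).B$, each of measure $>1-\eps$ by $\Lambda(n)$-invariance of $\nu$, so a union bound gives $\nu(B_1\cap\cdots\cap B_p)>\nu(B)^2/2-(p-1)\eps>0$ once $\eps<\nu(B)^2/(2(p-1))$, and only then are the $m_k$ extracted by unwrapping the unions. This deferral of the choice of $m_k$ is the missing idea. Two secondary divergences: (i) the $n\bZ^r$-subaction need not be ergodic, so one must pass to a well-chosen ergodic component $\nu$ with $\nu(B)$ comparable to $\mu(B)$ and a constant $c$ transferring positivity of intersections back to $\mu$ (Proposition \ref{PropositionShrinkingRationalSpec}); you keep working with $\mu$ and never address this. (ii) The mechanism for finding $\lambda$ is not $\eps$-density of $\rho(n\lambda)$ in a Kronecker factor but the Cauchy--Schwarz bound $\mu(S\lambda.B)\ge 1/\widetilde{\sigma}_B(L_\lambda^{\perp})$ together with a pigeonhole over a haystack of primitive vectors making $\sigma_B(L_\lambda^{\perp})$ small off the rational spectrum (Lemmas \ref{LemmaExpansionSpectral} and \ref{LemmaHaystacksAnnihilators}).
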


It is essential to highlight that there is no dimension constraint concerning $p$. Furthermore, akin to the outcomes in \cite{BB}, we do not map the finite set ${\lambda_2,\ldots,\lambda_p}$ into $E$ using the same affine function. While the dilation is always the same for all elements, the translation component varies from element to element. \\

Our main application of Theorem \ref{ThmIntoComb} is concerned with volume spectra of large sets. To elucidate this concept, consider $r+1$ points $v_o,v_1,\ldots,v_r \in \bZ^r$ such that the differences ${v_1-v_o,\ldots,v_r-v_o}$ are linearly independent, and form the $r$-simplex $S(v_o,\ldots,v_r) \subset \bR^r$, defined by
\[
S(v_o,\ldots,v_r) = \Big\{ \sum_{k=0}^r p_k v_k \, : \, (p_o,\ldots,p_r) \in [0,1]^{r+1}, \enskip \sum_{k=0}^r p_k = 1 \Big\}.
\]
The elements ${v_o,\ldots,v_r}$ are referred to as the \emph{vertices} of $S(v_o,\ldots,v_r)$. For a large set $E \subset \bZ^r$, our focus now lies in understanding the structure of the set consisting of the volumes of all $r$-simplices with vertices in $E$. This set is termed the \emph{volume spectrum} of $E$, and will be denoted by $\VS_r(E)$. It is well-known (see e.g., \cite{S}) that
\[
\Vol_r(S(v_o,\ldots,v_r)) = \frac{\det(v_1-v_o,v_2-v_o,\ldots,v_r-v_o)}{r!},
\]
where $\Vol_r$ is the (signed) Euclidean volume. Notably, if $(v_1-v_o,\ldots,v_r-v_o)$ are not linearly independent, then $\Vol_r(S(v_o,\ldots,v_r)) = 0$. With this background, we can now formulate and swiftly prove the following corollary of Theorem \ref{ThmIntoComb}. Hopefully the proof will demonstrate the relevance of the patterns guaranteed by this theorem.

\begin{corollary}
For every large set $E \subset \bZ^r$, there exists a non-zero integer $n$ such that
\[
n \cdot (\bZ \setminus \{0\}) \subset r! \cdot \VS_r(E). 
\]
\end{corollary}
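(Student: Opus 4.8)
The plan is to apply Theorem~\ref{ThmIntoComb} with $p=r$ (so we work with $r\ge 2$, the only case of interest for $r$-simplices) and then recognise the volume of the simplex produced by the theorem as a determinant that we are free to prescribe up to a fixed scalar.

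First I would feed the large set $E$ into Theorem~\ref{ThmIntoComb} with $p=r$, obtaining positive integers $n,m_1$ and a primitive vector $\lambda\in\bZ^r$ with the stated embedding property. Since $\lambda$ is primitive it extends to a $\bZ$-basis $\{\lambda,w_2,\dots,w_r\}$ of $\bZ^r$ (this is the one standard fact I would invoke); set $\varepsilon:=\det(\lambda,w_2,\dots,w_r)\in\{-1,+1\}$. Now for an arbitrary $t\in\bZ\setminus\{0\}$ I would apply the conclusion of the theorem with the particular choice $\lambda_2=tw_2$ and $\lambda_j=w_j$ for $3\le j\le r$. This produces $m_2,\dots,m_r\in\bZ\setminus\{0\}$ and $\lambda_o\in E$ for which the $r+1$ points
\[
v_0=\lambda_o,\qquad v_1=\lambda_o+m_1 n\lambda,\qquad v_k=\lambda_o+m_k n\lambda+n\lambda_k\ \ (2\le k\le r)
\]
all lie in $E$.

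The computation I would then carry out is: by multilinearity of the determinant, subtracting $(m_k/m_1)$ times the first column $v_1-v_0=m_1 n\lambda$ from the $k$-th column $v_k-v_0=m_k n\lambda+n\lambda_k$ cancels the $\lambda$-term, giving
\[
\det(v_1-v_0,\dots,v_r-v_0)=\det(m_1 n\lambda,\,n\lambda_2,\dots,n\lambda_r)=m_1 n^{r}\det(\lambda,\lambda_2,\dots,\lambda_r)=\varepsilon\,m_1 n^{r}\,t .
\]
For $t\ne 0$ this is non-zero, so $v_0,\dots,v_r$ are affinely independent and $S(v_0,\dots,v_r)$ is a genuine $r$-simplex with vertices in $E$ with $r!\,\Vol_r(S(v_0,\dots,v_r))=\varepsilon\,m_1 n^{r}\,t$. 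Hence $\varepsilon\,m_1 n^{r}\,t\in r!\cdot\VS_r(E)$ for every $t\in\bZ\setminus\{0\}$. Since $\varepsilon=\pm1$, as $t$ runs over $\bZ\setminus\{0\}$ so does $\varepsilon t$, and therefore the non-zero integer $m_1 n^{r}$ satisfies $m_1 n^{r}\cdot(\bZ\setminus\{0\})\subset r!\cdot\VS_r(E)$, which is the assertion.

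As for the main obstacle: there really isn't a serious one once Theorem~\ref{ThmIntoComb} is in hand — all the difficulty has been pushed into that theorem. The only points requiring any care are (i) choosing the test vectors $\lambda_2,\dots,\lambda_r$ so that the determinant becomes a controllable multiple of $t$, which is handled by completing the primitive vector $\lambda$ to a lattice basis, and (ii) checking non-degeneracy of the simplex, which is automatic because $m_1,n\ne 0$ force the determinant to be non-zero whenever $t\ne 0$.
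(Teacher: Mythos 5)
Your proposal is correct and follows essentially the same route as the paper: apply Theorem~\ref{ThmIntoComb} with $p=r$, compute the determinant $\det(v_1-v_0,\dots,v_r-v_0)=m_1n^r\det(\lambda,\lambda_2,\dots,\lambda_r)$ by multilinearity, and use the completion of the primitive vector $\lambda$ to a lattice basis (with one basis vector scaled by $t$) to realise every non-zero integer as the residual determinant. The only cosmetic difference is that you track the sign $\varepsilon=\pm1$ of the completed basis explicitly, whereas the paper normalises the completion so that the determinant equals $1$; both handle the signed-volume convention correctly.
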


\begin{remark}
In the paper \cite{G}, Graham proves a similar result for finite colorings of the Euclidean space $\bR^r$. Specifically, he establishes that for any finite coloring of $\bR^r$ and $\alpha > 0$, there is a monochromatic set $E$ for which there is a $r$-simplex $S$ with vertices in $E$ satisfying $\Vol_r(S) = \alpha$.
\end{remark}

\begin{proof}
Let $E \subset \bZ^r$ be a large set. By Theorem \ref{ThmIntoComb}, there 
are integers $n_o$ and $m_1$ and a primitive vector $\lambda \in \bZ^r$ such that for all $\lambda_2,\ldots,\lambda_r \in \bZ^r$, there
are $m_2,\ldots,m_r \in \bZ \setminus \{0\}$ and $\lambda_o \in E$ such that the 
vectors
\[
v_o = \lambda_o, \enskip v_1 = \lambda_o + m_1 n_o \lambda, \enskip v_k = \lambda_o + m_k n_o \lambda + n_o \lambda_k, \quad k = 2,\ldots,r,
\]
all belong to $E$. Utilizing the multilinearity and alternation of the determinant, we have
\[
\det(v_1-v_o,\ldots,v_r-v_o) = m_1 n_o^r \cdot \det(\lambda,\lambda_2,\ldots,\lambda_r).
\]
Let $n = m_1 n_o^r$. As $\lambda_2,\ldots,\lambda_r$ are arbitrary in $\bZ^{r}$,
we deduce that
\[
r! \cdot \VS_r(E) \supset n \cdot \{ \det(\lambda,\lambda_2,\ldots,\lambda_r) \, : \, \lambda_2,\ldots,\lambda_r \in \bZ^{r} \}.
\]
It is well-known (see for instance \cite[Section II, Chapter 5]{N}) that for every primitive vector $\lambda \in \bZ^r$, there exist $\lambda_2',\ldots,\lambda'_r \in \bZ^r$ such
that $\det(\lambda,\lambda'_2,\ldots,\lambda_r') = 1$. In particular, considering $(r-1)$-tuples $(\lambda_2,\ldots,\lambda_r)$ of the form $(m\lambda_2',\lambda_3',\ldots,\lambda'_r)$, for $m \in \bZ \setminus \{0\}$, we see that
\[
\{ \det(\lambda,\lambda_2,\ldots,\lambda_r) \, : \, (\lambda_2,\ldots,\lambda_r) \in \bZ^{r-1} \} = \bZ \setminus \{0\},
\]
for every primitive $\lambda \in \bZ^r$, and we are done.
\end{proof}

\subsection{Main dynamical results}

It is straightforward to observe, and the details are provided in Section \ref{sec:PrfComb}, that Theorem \ref{ThmIntoComb} can be derived, through Furstenberg's Correspondence Principle, from the following dynamical result. For the remainder of this section, let $(X,\mathscr{B}_X)$ denote a standard Borel space with a measurable 
$\bZ^r$-action
\[
\bZ^r \times X \ra X, \enskip (\lambda,x) \mapsto \lambda.x.
\]
We assume $\mu$ is a $\bZ^r$-invariant and $\bZ^r$-ergodic probability measure on $X$, referring to the pair $(X,\mu)$ as an \emph{ergodic $\bZ^r$-space}. We also fix a $\mu$-measurable set $B \subset X$ with positive $\mu$-measure.

\begin{theorem}
\label{ThmIntroIntersections}
For every $p \geq 2$, there are positive integers $n$ and $m_1$ and a primitive vector $\lambda \in \bZ^r$ with the property that for all $\lambda_2,\ldots,\lambda_p \in \Lambda$, there are positive integers 
$m_2,\ldots,m_p$ such that
\[
\mu\Big( B \cap m_1n\lambda.B \cap \Big(\bigcap_{k=2}^p (m_kn \lambda + n \lambda_k).B \Big)\Big) > 0.
\]
\end{theorem}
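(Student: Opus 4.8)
\emph{Plan.} I would pass to the Kronecker factor of $(X,\mu)$, handle its pure‑point (compact group rotation) part by a geometric equidistribution argument that exploits the freedom to take the primitive vector $\lambda$ arbitrarily \emph{long}, handle the weakly‑mixing part by a directional van der Corput estimate, and then combine the two. Throughout write $v_0=0$, $v_1=m_1 n\lambda$, $v_k=m_kn\lambda+n\lambda_k$ for $2\le k\le p$, so that the quantity to be bounded below is $\mu\big(\bigcap_{k=0}^{p}v_k.B\big)=\int_X\prod_{k=0}^{p}U_{-v_k}\mathbf 1_B\,d\mu$, where $U$ denotes the Koopman representation of $\bZ^r$.

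\emph{Step 1: splitting off the Kronecker factor.} Decompose $\mathbf 1_B=\tilde b+g$, where $\tilde b=\mathbb E_\mu[\mathbf 1_B\mid\mathscr K]$ is the conditional expectation onto the Kronecker factor $\mathscr K$ and $g=\mathbf 1_B-\tilde b$ lies in the $U$‑invariant subspace $L^2(\mathscr K)^{\perp}$, so $g$ has non‑atomic spectral measure $\sigma_g$ on $\widehat{\bZ^r}=\bT^r$. Expanding $\int_X\prod_k U_{-v_k}(\tilde b+g)\,d\mu$ gives one \emph{structured} term $\int_X\prod_kU_{-v_k}\tilde b\,d\mu$ (which, since $\tilde b$ is $\mathscr K$‑measurable, is computed inside $\mathscr K$), plus $2^{p+1}-1$ error terms each containing at least one factor $U_{-v_k}g$. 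Any error term with \emph{exactly one} $g$ vanishes, because $U_{-v_k}g\perp L^2(\mathscr K)$ while the product of the remaining factors is $\mathscr K$‑measurable; the error terms with two or more $g$'s will be driven below a threshold by averaging over the free parameters $m_2,\dots,m_p$, using that for $k\ge2$ one has $\tfrac1N\sum_{m_k\le N}U_{-v_k}g\to U_{-n\lambda_k}\,\mathrm{Proj}_{(n\lambda).}g$, and $\|\mathrm{Proj}_{(n\lambda).}g\|^2=\sigma_g\big(\{t:\langle n\lambda,t\rangle\in\bZ\}\big)$.

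\emph{Step 2: the Kronecker core, and the roles of $n$ and a long $\lambda$.} Model $\mathscr K$ as $(Z,m_Z)$, $Z$ a compact metrizable abelian group, with action $v.z=z+\rho(v)$ for $\rho:\bZ^r\to Z$ of dense image. Since $\sigma_g$ is non‑atomic there is a primitive $\lambda$ with $\sigma_g\big(\{t:\langle\lambda,t\rangle\in\bZ\}\big)$ as small as we like (a continuous measure charges only countably many of the codimension‑one rational subtori); choosing in addition $n$ to be coprime to the moduli of the finitely many finite quotients of $Z$ that carry appreciable $\tilde b$‑mass neutralizes the totally disconnected part of $Z$. Finally, enlarging $\|\lambda\|$, the closed subgroup $H:=\overline{\langle n\rho(\lambda)\rangle}\le Z$ becomes $\varepsilon$‑dense: in the connected part of $Z$ the $\lambda$‑direction sweeps out a subtorus whose transverse lattice has covolume $\asymp\|\lambda\|^{-1}$, hence covering radius $\to0$. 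Fix such $n,\lambda$ with $\varepsilon$ small in terms of $\tilde b$ and $p$, and fix $m_1\ge1$ (depending only on $B,n,\lambda$, not on $\lambda_2,\dots,\lambda_p$) with $\|m_1n\rho(\lambda)\|_Z<\varepsilon$ and with $\big|\int_X g\cdot U_{-m_1n\lambda}g\,d\mu\big|$ small — possible since the first condition cuts out a syndetic set of $m_1$'s while the Cesàro average over $m_1$ of the second quantity tends to $\sigma_g\big(\{t:\langle n\lambda,t\rangle\in\bZ\}\big)$, which is small.

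\emph{Step 3: choosing $m_2,\dots,m_p$ and concluding.} Given arbitrary $\lambda_2,\dots,\lambda_p\in\bZ^r$, we pick $(m_2,\dots,m_p)$ \emph{simultaneously} from $\{1,\dots,N\}^{p-1}$ subject to: (i) $m_kn\rho(\lambda)$ lies within $\varepsilon$ of $-n\rho(\lambda_k)$ in $Z$ for each $k$ — possible since $-n\rho(\lambda_k)$ is within the covering radius of the $\varepsilon$‑dense $H$ and the admissible $m_k$ form a syndetic set, so the corresponding product set has positive density; and (ii) all remaining error terms are $<\varepsilon/2$ in total — the multi‑parameter Cesàro average of each such term is small by Step 1, so the bad $(m_2,\dots,m_p)$ have density $o_N(1)$, and — crucially — this density bound is uniform in $\lambda_2,\dots,\lambda_p$, because translating by $n\lambda_k$ only multiplies the relevant spectral data by unimodular phases. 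The two constraints are jointly satisfiable for $N$ large. Constraint (i) forces $\rho(v_k)=O(\varepsilon)$ for every $k$, so by continuity of translation in $L^1(Z)$ the structured term is $\int_Z\prod_k\tilde b(\cdot-\rho(v_k))\,dm_Z\ge\int_Z\tilde b^{\,p+1}\,dm_Z-\varepsilon/2>0$ (recall $\tilde b\ge0$, $\int\tilde b=\mu(B)>0$, hence $\int\tilde b^{\,p+1}>0$); combined with (ii) this gives $\mu\big(\bigcap_{k=0}^{p}v_k.B\big)>0$.

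\emph{Expected main obstacle.} The delicate point is the \emph{uniformity} in Step 3(ii): one must show that a single long primitive $\lambda$ — chosen with no knowledge of $\lambda_2,\dots,\lambda_p$ — simultaneously makes the directional weakly‑mixing errors negligible along the same syndetic sets of times that place $\rho(v_k)$ near $0$, for \emph{every} choice of the $\lambda_k$; this is exactly where the ``spectral conditions'' of the title do their work. A secondary burden is making Step 2 rigorous when the Kronecker factor is genuinely infinite‑dimensional (an inverse limit of tori and finite groups), which is where the choice of the dilation $n$ is forced upon us.
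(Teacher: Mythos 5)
Your overall strategy (find one good direction $\lambda$ and dilation $n$ first, then recover $m_2,\dots,m_p$ by an averaging/density argument) is the right shape, but two steps as proposed would fail. First, the treatment of the finite factors is backwards: choosing $n$ \emph{coprime} to the moduli of the relevant finite quotients does not neutralize them. If $Z$ has a non-cyclic finite quotient $G=\bZ^r/\Lambda_o$ and $n$ is invertible mod the exponent of $G$, then $n\lambda_k$ still ranges over all of $G$ as $\lambda_k$ varies, while $m_kn\lambda$ only moves along the cyclic subgroup generated by the image of $\lambda$, which has index $\geq 2$; so for some $\lambda_k$ no choice of $m_k$ brings $\rho(v_k)$ back to $0$ (take $X=\bZ^2/2\bZ^2$, $B=\{0\}$, $\lambda=(1,0)$, $\lambda_2=(0,1)$ --- this is exactly Example \ref{Ex2} of the paper). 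What is needed is the opposite: $n$ divisible by the exponents, so that all the $v_k$ lie in the relevant finite-index subgroups; making ``the finitely many quotients carrying appreciable mass'' precise is nontrivial and is the content of the paper's Proposition \ref{PropositionShrinkingRationalSpec} (a limiting argument over $n=m!$ using Lemma \ref{LemmaFactsFAGs}, together with a passage to an ergodic component of the $\Lambda(n)$-action). Relatedly, the existence of a primitive $\lambda$ making $\overline{\bZ\,n\rho(\lambda)}$ $\varepsilon$-dense (equivalently, making $\sigma_B(L_{n\lambda}^{\perp})$ close to $\sigma_B(\{1\})$) is not a consequence of $\|\lambda\|$ being large; it requires an argument of the type of the paper's Lemmas \ref{LemmaHaystacksAnnihilators} and \ref{LemmaSmallsetsFromSmallIntersections} (haystacks plus pigeonhole), and it genuinely uses that the measure gives no mass to the rational spectrum.

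Second, the error-term analysis has a gap that the Kronecker decomposition cannot absorb: the term with $g$ in positions $0$ and $1$ and $\tilde b$ elsewhere does not vanish and is not controlled by averaging over $(m_2,\dots,m_p)$; that average converges to $\int_X g\cdot U_{-m_1n\lambda}g\cdot h\,d\mu$ with $h=\prod_{k\ge 2}U_{-n\lambda_k}\bE_\mu[\tilde b\mid\cE_{\bZ n\lambda}]$ a bounded $\mathscr K$-measurable function depending on the adversarial $\lambda_k$. Smallness of $\int g\,U_{-m_1n\lambda}g\,d\mu$ does not control this, since $\bE_\mu[g\cdot U_{-m_1n\lambda}g\mid\mathscr K]$ need not be (nearly) constant; bounding it for all admissible $h$ would require that \emph{every} atom of the spectral measure of $g$ in the direction $n\lambda$ be small, a condition you have not arranged. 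The fix --- and essentially the paper's proof --- is to drop the decomposition entirely and exploit positivity: averaging over $(m_2,\dots,m_p)$ independently gives
\begin{equation*}
\frac{1}{N^{p-1}}\sum_{\vec m}\mu\Big(\bigcap_k v_k.B\Big)\ \longrightarrow\ \int_X \mathbf 1_B\cdot U_{-m_1n\lambda}\mathbf 1_B\cdot\prod_{k=2}^{p}U_{-n\lambda_k}\bE_\mu[\mathbf 1_B\mid\cE_{\bZ n\lambda}]\,d\mu,
\end{equation*}
and since all factors are nonnegative, positivity follows from a union bound on supports once $\mu(\{\bE_\mu[\mathbf 1_B\mid\cE_{\bZ n\lambda}]>0\})>1-\eps$ with $(p-1)\eps<\mu(B\cap m_1n\lambda.B)$. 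This ``directional expansion'' statement is exactly Theorem \ref{ThmMainErg}, and it replaces both your equidistribution step and your error analysis.
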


To establish Theorem \ref{ThmIntroIntersections}, it is crucial to comprehend the actions of cyclic subgroups of $\bZ^r$ on $(X,\mu)$, even if these actions are not necessarily ergodic. Despite the potential absence of ergodicity for any single cyclic subgroup, we demonstrate that, given certain spectral constraints on the set $B$, there must be a direction that substantially expands $B$. To clarify this concept, we introduce the following definitions. 

\begin{definition}
Given an ergodic $\bZ^r$-space $(X,\mu)$, an element $\lambda \in \bZ^r \setminus {0}$ is termed an \emph{ergodic direction} if the action of the cyclic subgroup $\bZ \lambda$ on $(X,\mu)$ is ergodic. A $\mu$-measurable set $B \subset X$ with positive $\mu$-measure is called \emph{directionally expandable} if, for every $\epsilon > 0$, there exists $\lambda_\epsilon \in \bZ^r \setminus {0}$ such that $\mu(\bZ\lambda_\epsilon.B) > 1 - \epsilon$.
\end{definition}

\noindent These definitions raise two immediate questions:
\vspace{0.1cm}
\begin{itemize}
\item[$\bullet$] Do ergodic directions always exist? \vspace{0.1cm}
\item[$\bullet$] Is every $\mu$-measurable set with positive $\mu$-measure directionally expandable?
\end{itemize}
\vspace{0.1cm}
\noindent As the following examples illustrate, the answer to both questions is no
\begin{example}[No ergodic directions]
This example falls into a class of weakly mixing $\bZ^r$-actions described in \cite[Examples 2.11, 5.11]{RRS}, attributed to Bergelson and Ward. Consider a weakly mixing $\bZ$-action $T \acts (Y,\nu)$ and define the $\bZ^r$-space $(X,\mu)$ by
\[
(X,\mu) = \Big(\prod_{i=1}^\infty Y, \nu^{\bN}\Big) \qand (\lambda.x)_i = T^{\langle \lambda,\eta_i \rangle} x_i, \quad \textrm{for $\lambda \in \bZ^r$ and $i \in \bN$},
\]
where $(\eta_i)$ of $\bZ^r \setminus \{0\}$ is a fixed enumeration of $\bZ^r \setminus \{0\}$ and $\langle \cdot, \cdot \rangle$ denotes the standard inner product on $\bZ^r$. One can readily check that the action $\bZ^r \acts (X,\mu)$ is weakly mixing. Fix a $\nu$-measurable set $B_o \subset Y$ with $0 < \nu(B_o) < 1$. For a given $\lambda \in \Lambda$, we fix an index $i$ such that $\langle \lambda,\eta_i \rangle = 0$, and define
\[
B_i = \{ x \in X \, : \, x_{i} \in B_o\} \subset X.
\]
Then $\mu(B_i) = \nu(B_o) \in (0,1)$ and $B_i$ is invariant under the subgroup $\bZ \lambda < \bZ^r$. Indeed, since $\langle \lambda,\eta_i \rangle = 0$, we have
\[
\lambda.B_i = \{ \lambda.x \in X \, : \, x_i \in B_o \} = 
\{ \lambda.x \in X \, : \, (\lambda.x)_i \in B_o \} = B_i.
\]
In particular, every cyclic subgroup $\bZ\lambda < \bZ^r$ acts non-ergodically on $(X,\mu)$. However, for a given index $i$, note that if $\lambda_o \in \bZ^r$ is instead chosen so that $\langle \lambda_o,\eta_i \rangle = n \neq 0$, then 
\[
\bZ\lambda_o.B_i = \Big\{ x \in X \, : \, x_i \in \bigcup_{k \in \bZ} T^{-kn}B_o\Big\},
\]
which is a $\mu$-conull subset of $X$, since $T$ is weakly mixing on $(Y,\nu)$. Hence, for a fixed index $i$, the set $B_i$ is directionally expandable (in fact, we can take the same direction 
$\lambda_\eps$ for every $\eps > 0$).
\end{example}

\begin{example}[A set which is not directionally expandable]
\label{Ex2}
Consider a finite-index subgroup $\Lambda_o < \bZ^r$ such that $\bZ^r/\Lambda_o$ is not cyclic. Let $X = \bZ^r/\Lambda_o$ with the canonical translation action by $\bZ^r$ and equip $X$ with the normalized counting measure $\mu$. The singleton set $B = \{\Lambda_o\}$ has positive $\mu$-measure, but for every $\lambda \in \Lambda_o$, the cyclic subgroup $\bZ\lambda.B$ has index at least two in $X$ (since $\bZ^r/\Lambda_o$ is not cyclic). Hence, $\mu(\bZ\lambda.B) \leq \frac{1}{2}$ for all $\lambda \in \Lambda_o$, and $B$ is not directionally expandable.
\end{example}

Example \ref{Ex2} highlights that the existence of a finite (non-cyclic) $\bZ^r$-factor obstructs directional expansion. Therefore, any meaningful results about the expansive properties of $\bZ^r$-actions should account for these finite factors. We address this by imposing a condition on the normalized spectral measure of the rational spectrum, requiring it to be sufficiently small. To elaborate on this, we introduce some notation. \\

If $(X,\mu)$ is an ergodic $\bZ^r$-space and $B \subset X$ is a $\mu$-measurable set with positive $\mu$-measure, there exists a unique finite and non-negative Borel measure $\sigma_B$ on the dual group $\widehat{\bZ^r} \simeq \bT^r$ such that:
\[
\mu(B \cap \lambda.B) = \int_{\widehat{\bZ^r}} \xi(\lambda) \, d\sigma_B(\xi), \quad \textrm{for all $\lambda \in \bZ^r$}.
\]
It is a well-known fact (see e.g. Lemma \ref{LemmaSpectralMeas} below) that if $\mu$ is a $\bZ^r$-ergodic measure, then $\sigma_B(\{1\}) = \mu(B)^2 > 0$, allowing us to define the \emph{normalized spectral measure} $\widetilde{\sigma}_B$ by
\[
\widetilde{\sigma}_B = \frac{\sigma_B}{\sigma_B(\{1\})}.
\]
The finite $\bZ^r$-factors of $\bZ^r \acts (X,\mu)$ are directly related to an important subset $\rat(\bZ^r)$ of $\widehat{\bZ^r}$ known as the rational spectrum, defined by
\[
\rat(\bZ^r) = \{ \xi \in \widehat{\bZ^r} \, : \, \xi|_{\Lambda_o} = 1, \enskip \textrm{for some finite-index subgroup $\Lambda_o < \bZ^r$} \}.
\]
Our first main dynamical result is stated as follows. 

\begin{theorem}
\label{ThmIntroErg}
Let $(X,\mu)$ be an ergodic $\bZ^r$-space and let $\eps_o \geq 0$. Suppose that $B \subset X$ is a $\mu$-measurable set with positive $\mu$-measure such that $\widetilde{\sigma}_B(\rat(\bZ^r) \setminus \{1\}) \leq \eps_o$. Then, for all $\eps > \eps_o$, there exists a primitive vector $\lambda_\eps \in \bZ^r$ such that
\[
\mu(\bZ\lambda_\eps.B) > 1-\eps.
\]
\end{theorem}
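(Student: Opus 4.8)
The plan is to analyze the spectral measure of the set $\bZ\lambda.B$ for a carefully chosen direction $\lambda$ and show that the mass of $\widetilde\sigma_B$ lying "off the $\lambda$-axis" can be made arbitrarily small, while the rational-spectrum obstruction on that axis is controlled by the hypothesis. First I would recall the Fourier-analytic picture: for a fixed primitive $\lambda\in\bZ^r$, ergodic-theoretic averaging along the cyclic group $\bZ\lambda$ shows that $\mu(\bZ\lambda.B)=1-\mu(X\setminus \bZ\lambda.B)$, and the measure of the $\bZ\lambda$-invariant set $X\setminus\bZ\lambda.B$ is bounded in terms of how much of $\widetilde\sigma_B$ is concentrated on characters $\xi$ with $\xi(\lambda)\ne 1$. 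More precisely, writing $H_\lambda=\{\xi\in\widehat{\bZ^r}:\xi(\lambda)=1\}$ (a closed subgroup, the "annihilator line" of $\lambda$), the key inequality I expect to need is of the form
\[
\mu(X\setminus \bZ\lambda.B)\ \le\ C\cdot \widetilde\sigma_B\big(\widehat{\bZ^r}\setminus H_\lambda\big),
\]
obtained by applying the mean ergodic theorem along $\bZ\lambda$ to $\mathbf 1_B$ and comparing the limit (the conditional expectation onto $\bZ\lambda$-invariants) with $\mu(B)$; the defect is exactly an integral of $\xi(\lambda)$-type quantities against $\sigma_B$ supported away from $H_\lambda$. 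So it suffices to choose $\lambda$ primitive so that $\widetilde\sigma_B(\widehat{\bZ^r}\setminus H_\lambda)$ is as small as we like, modulo the part of the spectrum we cannot avoid.

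The next step is to identify what we cannot avoid. The measure $\widetilde\sigma_B$ is a finite Borel measure on $\bT^r$ with an atom of mass $1$ at $1$. Split $\widehat{\bZ^r}=\{1\}\cup\rat(\bZ^r)\setminus\{1\}\cup (\widehat{\bZ^r}\setminus\rat(\bZ^r))$. The hypothesis gives $\widetilde\sigma_B(\rat(\bZ^r)\setminus\{1\})\le\eps_o$. For the non-rational part: a character $\xi\notin\rat(\bZ^r)$ has the property that $\xi(\lambda)\ne 1$ for "most" $\lambda$; concretely, $\{\lambda:\xi(\lambda)=1\}$ is then a subgroup of infinite index (not finite index), hence contained in a proper rational subspace, and as $\lambda$ ranges over primitive vectors these subgroups can be avoided. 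I would make this quantitative: pick a small $\delta>0$, cover the compact set of non-rational characters (or rather, decompose $\widetilde\sigma_B$ restricted there) and use a pigeonhole / Baire-category or measure-theoretic averaging argument over a large box of primitive $\lambda$'s to find a single $\lambda$ for which the $\widetilde\sigma_B$-mass of $\{\xi:\xi(\lambda)\ne 1,\ \xi\notin\rat(\bZ^r)\}$ is below $\delta$. The cleanest route is probably: for each $\xi$ in the non-rational part, $\int_{[-N,N]^r}\mathbf 1[\xi(\lambda)\text{ near }1]\,d\lambda/(2N+1)^r\to 0$ as $N\to\infty$ by Weyl/Wiener, so integrating against $\widetilde\sigma_B$ and using Fubini produces a primitive $\lambda$ with small exceptional mass.

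Putting these together: given $\eps>\eps_o$, set $\delta=(\eps-\eps_o)/(2C)$ (with $C$ the constant above), choose $\lambda$ primitive so that the non-rational exceptional mass is $<\delta$; then
\[
\widetilde\sigma_B\big(\widehat{\bZ^r}\setminus H_\lambda\big)\ \le\ \widetilde\sigma_B(\rat(\bZ^r)\setminus\{1\})+\delta\ \le\ \eps_o+\delta,
\]
so $\mu(X\setminus\bZ\lambda.B)\le C(\eps_o+\delta)$. This is not yet quite $\eps$, so the bookkeeping must be arranged so the constant is absorbed — either by noting $C$ can be taken to be $1$ after the right normalization (the natural estimate should in fact give $\mu(X\setminus\bZ\lambda.B)\le \widetilde\sigma_B(\widehat{\bZ^r}\setminus H_\lambda)/\text{(something)}$ with the atom at $1$ contributing $\mu(B)$ to the invariant projection, making the constant benign), or by refining the ergodic-averaging estimate. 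I expect the main obstacle to be precisely this step: getting the sharp constant in the passage from "spectral mass off the $\lambda$-line" to "measure of the non-expanded part," i.e. showing $\mu(\bZ\lambda.B)\ge 1-\widetilde\sigma_B(\widehat{\bZ^r}\setminus H_\lambda)$ cleanly, rather than with a lossy constant. The identification of the $\bZ\lambda$-invariant $\sigma$-algebra's contribution to the spectral measure (it corresponds exactly to the restriction of $\sigma_B$ to $H_\lambda$) is the technical heart, and handling primitivity of $\lambda$ simultaneously with the avoidance argument requires a little care but is not serious.
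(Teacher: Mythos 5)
Your key inequality is reversed, and this is fatal to the argument as written. The spectral obstruction to expansion in the direction $\lambda$ is the mass of $\widetilde{\sigma}_B$ \emph{on} the annihilator $H_\lambda=\{\xi:\xi(\lambda)=1\}$ away from the trivial character, not the mass off it. Indeed, $\sigma_B(H_\lambda)=\int_X \bE_\mu[\chi_B\mid\cE_{\bZ\lambda}]^2\,d\mu$ is the squared $L^2$-norm of the conditional expectation onto $\bZ\lambda$-invariants, and $\mu(\bZ\lambda.B)=\mu(\{\bE_\mu[\chi_B\mid\cE_{\bZ\lambda}]>0\})$ is large precisely when that conditional expectation is \emph{spread out}, i.e.\ when $\sigma_B(H_\lambda)$ is close to its minimum $\sigma_B(\{1\})=\mu(B)^2$. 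Your proposed bound $\mu(X\setminus\bZ\lambda.B)\le C\,\widetilde{\sigma}_B(\widehat{\bZ^r}\setminus H_\lambda)$ fails for any $C$: if $B$ is $\bZ\lambda$-invariant with $0<\mu(B)<1$ (as in the paper's Example 1.7), then $\sigma_B$ is entirely supported on $H_\lambda$, so the right-hand side is $0$ while $\mu(X\setminus\bZ\lambda.B)=1-\mu(B)>0$. The same reversal infects the selection step: you aim to make $\widetilde{\sigma}_B(\{\xi:\xi(\lambda)\ne1\})$ small, but your own (correct) observation that a non-rational $\xi$ satisfies $\xi(\lambda)\ne1$ for most $\lambda$ shows this quantity is generically \emph{large}; what one must arrange is that $\widetilde{\sigma}_B(H_\lambda\setminus\{1\})$ is small, i.e.\ that $\xi(\lambda)\ne1$ for $\widetilde{\sigma}_B$-most nontrivial $\xi$.

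The paper's actual argument runs as follows: Cauchy--Schwarz applied to $f=\bE_\mu[\chi_B\mid\cE_{\bZ\lambda}]$ gives $\mu(\{f>0\})\ge(\int f)^2/\int f^2=\sigma_B(\{1\})/\sigma_B(H_\lambda)$, hence $\mu(\bZ\lambda.B)\ge 1/\widetilde{\sigma}_B(H_\lambda)=1/\bigl(1+\widetilde{\sigma}_B(H_\lambda\setminus\{1\})\bigr)$ (Lemma \ref{LemmaExpansionSpectral}); this is a multiplicative lower bound, not a linear bound on the complement, and it makes your worry about the constant $C$ moot. Then a pigeonhole argument over a ``haystack'' of primitive vectors --- any $r$ of whose annihilators intersect inside $\rat(\bZ^r)$, a $\widetilde{\sigma}_B$-small set by hypothesis --- produces a primitive $\lambda$ with $\widetilde{\sigma}_B(H_\lambda\setminus\{1\})\le\eps_o+\delta$ (Lemma \ref{LemmaHaystacksAnnihilators}), whence $\mu(\bZ\lambda.B)\ge 1/(1+\eps_o+\delta)>1-\eps$. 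Your averaging-over-a-box idea (for fixed non-rational $\xi$ the subgroup $\ker\xi$ has rank at most $r-1$, hence density zero, so Fubini over $[-N,N]^r$ yields a primitive $\lambda$ with $\widetilde{\sigma}_B(\{\xi\notin\rat(\bZ^r):\xi(\lambda)=1\})<\delta$) would serve as a legitimate substitute for the haystack lemma once the set is corrected from $\{\xi(\lambda)\ne1\}$ to $\{\xi(\lambda)=1\}$; but the expansion inequality itself must be replaced by the Cauchy--Schwarz bound above.
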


A $\bZ^r$-space is said to be \emph{totally ergodic} if every finite-index subgroup of $\bZ^r$ acts ergodically on $(X,\mu)$. In this context, it is evident that $\sigma_B(\rat(\bZ^r) \setminus {1}) = 0$ for any $\mu$-measurable set $B \subset X$ with positive $\mu$-measure. As a result, we obtain the following corollary.

\begin{corollary}
\label{CorTotErg}
If $(X,\mu)$ is a totally ergodic $\bZ^r$-space, then every $\mu$-measurable set $B \subset X$ with positive $\mu$-measure is directionally expandable.
\end{corollary}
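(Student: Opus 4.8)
The plan is to deduce Corollary \ref{CorTotErg} directly from Theorem \ref{ThmIntroErg} by checking that total ergodicity forces the hypothesis $\widetilde{\sigma}_B(\rat(\bZ^r)\setminus\{1\}) = 0$, so that we may take $\eps_o = 0$ and obtain directional expandability from the conclusion of the theorem.

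First I would recall that $\rat(\bZ^r)$ is a countable set: each finite-index subgroup $\Lambda_o < \bZ^r$ contributes only finitely many characters $\xi$ with $\xi|_{\Lambda_o} = 1$ (namely the characters of the finite group $\bZ^r/\Lambda_o$), and there are only countably many finite-index subgroups. Hence it suffices to show $\sigma_B(\{\xi\}) = 0$ for every $\xi \in \rat(\bZ^r)\setminus\{1\}$. Fix such a $\xi$ and a finite-index subgroup $\Lambda_o$ with $\xi|_{\Lambda_o} = 1$. The key step is to observe that an atom of $\sigma_B$ at a nontrivial rational character is incompatible with ergodicity of $\Lambda_o$ on $(X,\mu)$. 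Concretely, consider the restricted action of $\Lambda_o$ on $(X,\mu)$; for $\lambda \in \Lambda_o$ we have $\mu(B \cap \lambda.B) = \int_{\widehat{\bZ^r}} \xi'(\lambda)\, d\sigma_B(\xi')$, and since every $\xi' \in \rat(\bZ^r)$ restricts trivially on a finite-index subgroup, we can pass to a further finite-index subgroup $\Lambda_1 < \Lambda_o$ on which all the (countably many) atoms of $\sigma_B$ in $\rat(\bZ^r)$ restrict trivially; then along $\Lambda_1$ the correlation $\mu(B\cap\lambda.B)$ has a nonzero "constant" contribution $\sigma_B(\rat(\bZ^r))$ from these atoms, which by the mean ergodic theorem for the ergodic action of $\Lambda_1$ must equal $\mu(B)^2 = \sigma_B(\{1\})$; hence $\sigma_B(\rat(\bZ^r)\setminus\{1\}) = 0$. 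Total ergodicity is exactly what guarantees such a $\Lambda_1$ acts ergodically.

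Alternatively, and perhaps more cleanly, I would argue spectrally: the orthogonal projection $P$ onto the $\Lambda_o$-invariant functions in $L^2(X,\mu)$ is, by ergodicity of $\Lambda_o$, the projection onto the constants, and $\langle P \mathbf{1}_B, \mathbf{1}_B\rangle = \mu(B)^2$. On the other hand, writing $\mathbf{1}_B$ via its spectral decomposition with respect to the unitary $\bZ^r$-representation on $L^2(X,\mu)$, the quantity $\langle P\mathbf{1}_B,\mathbf{1}_B\rangle$ equals the total $\sigma_B$-mass carried by characters that are trivial on $\Lambda_o$; this includes the atom at $\xi$. Since the total such mass is $\mu(B)^2 = \sigma_B(\{1\})$ already accounted for by the trivial character, every other contributing atom — in particular $\xi$ — must have zero mass. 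As $\xi \in \rat(\bZ^r)\setminus\{1\}$ was arbitrary, $\sigma_B(\rat(\bZ^r)\setminus\{1\}) = 0$.

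With $\widetilde{\sigma}_B(\rat(\bZ^r)\setminus\{1\}) = 0 \leq \eps_o$ for every $\eps_o \geq 0$, I apply Theorem \ref{ThmIntroErg} with $\eps_o = 0$: for every $\eps > 0$ there is a primitive $\lambda_\eps \in \bZ^r$ with $\mu(\bZ\lambda_\eps.B) > 1 - \eps$, which is precisely the assertion that $B$ is directionally expandable. I expect the only real subtlety to be the bookkeeping in the ergodic/spectral step — making sure that "trivial on some finite-index subgroup" can be upgraded to "simultaneously trivial on one common finite-index subgroup" for the relevant (countably many) atoms — but this is routine given that $\rat(\bZ^r)$ is countable and finite-index subgroups are closed under finite intersection.
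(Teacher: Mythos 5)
Your proposal is correct and takes essentially the same route as the paper, which simply asserts that total ergodicity makes $\sigma_B(\rat(\bZ^r)\setminus\{1\})=0$ evident and then applies Theorem \ref{ThmIntroErg} with $\eps_o=0$; your second, atom-by-atom spectral argument (ergodicity of the finite-index subgroup $\Lambda_o$ forces $\sigma_B(\Lambda_o^{\perp})=\mu(B)^2=\sigma_B(\{1\})$, hence no mass on $\Lambda_o^{\perp}\setminus\{1\}$, and $\rat(\bZ^r)$ is countable) is a correct justification of that assertion. One caution: in your first sketch, countably many characters each trivial on \emph{some} finite-index subgroup need not be simultaneously trivial on a single common finite-index subgroup (countable intersections of finite-index subgroups of $\bZ^r$ can have infinite index), so you should rely on the atom-by-atom version rather than that step.
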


\begin{remark}
In contrast to Theorem \ref{ThmIntroErg}, it is noteworthy that Theorem \ref{ThmIntroIntersections} imposes no spectral constraints on the set $B$. To derive Theorem \ref{ThmIntroIntersections} from Theorem \ref{ThmIntoComb}, we initially establish that for any $B$, there exists a finite-index subgroup $\Lambda_o < \bZ^r$ along with an ergodic component $\nu$ for the action $\Lambda_o \acts (X,\mu)$ such that $\widetilde{\sigma}_{\nu,B}(\rat(\Lambda_o) \setminus \{1\})$ is small, while $\nu(B)$ is comparable to $\mu(B)$. This constitutes a somewhat intricate step and is outlined in Section \ref{sec:RatSpec}.
\end{remark}

\subsection{Organization of the paper}
In Section \ref{sec:prel}, basic concepts essential for the proofs are introduced. Section \ref{sec:ThmMainErg} establishes a more comprehensive version of Theorem \ref{ThmIntroErg}. The subsequent Section \ref{sec:RatSpec} delves into the reduction of the size of the rational spectrum upon passing to finite-index subgroups, a crucial step elucidated further in Section \ref{sec:ThmMainInt} where a more generalized form of Theorem \ref{ThmIntroIntersections} is proven. In Section \ref{sec:PrfComb} we provide a proof for Theorem \ref{ThmIntoComb}.

\subsection{Acknowledgements}
M.B. was supported by the grant 11253320 from the Swedish Research Council. A.F. was supported by the ARC via grants DP210100162 and DP240100472. A.F. is grateful to Kamil Bulinski for useful discussions on the topic of the paper, Mumtaz Hussain, Uri Onn and their institutions LaTrobe Bendigo and the ANU for the hospitality. A.F. also would like to thank Martin Wechselberger and all SDG community for stimulating discussions related to the content of this paper.

\section{Preliminaries}
\label{sec:prel}

\subsection{Free abelian groups and their duals}

Let $\Lambda$ be a free abelian group of rank $r$. We note that, upon fixing a $\bZ$-basis $\mathscr{B} = (\beta_1,\ldots,\beta_r)$ of $\Lambda$, the map
\[
\bZ^r \ra \Lambda, \enskip (m_1,\ldots,m_r) \mapsto \sum_{k=1}^r m_k \beta_k
\]
is a group isomorphism. For a positive integer $n \geq 1$, let $\Lambda(n) = n \cdot \Lambda$. Note that $\Lambda(n)$ has finite index in $\Lambda$. We will need the following basic result which is an immediate consequence of Schmidt's normal form.

\begin{lemma}
\label{LemmaFactsFAGs}
Let $n \geq 1$ and suppose that $\Lambda' < \Lambda(n)$ is a finite-index subgroup.
Then there is an integer $N \geq n$ such that $\Lambda(N) < \Lambda'$.
\end{lemma}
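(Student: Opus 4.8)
The plan is to reduce everything to Lagrange's theorem in the finite quotient group $\Lambda/\Lambda'$, and to observe that the required lower bound $N \geq n$ comes for free from an index computation. First I would record that $\Lambda(n) = n\Lambda$ has finite index in $\Lambda$, with $\Lambda/\Lambda(n) \cong (\bZ/n\bZ)^r$, so that $[\Lambda:\Lambda(n)] = n^r$. Since $\Lambda' < \Lambda(n)$ is assumed of finite index in $\Lambda(n)$, the tower law shows that $m := [\Lambda:\Lambda']$ is finite and divisible by $n^r$; in particular $m \geq n^r \geq n$ (using $n \geq 1$ and $r \geq 1$).

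Applying Lagrange's theorem to the finite abelian group $\Lambda/\Lambda'$ of order $m$, every coset satisfies $m(x+\Lambda') = \Lambda'$, i.e.\ $mx \in \Lambda'$ for all $x \in \Lambda$. Hence $\Lambda(m) = m\Lambda < \Lambda'$, and taking $N = m$ finishes the proof since $N = m \geq n$.

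Alternatively — and this is presumably the route the statement alludes to — one can invoke the normal form for subgroups of free abelian groups: there is a $\bZ$-basis $(e_1,\ldots,e_r)$ of $\Lambda$ and positive integers $d_1 \mid d_2 \mid \cdots \mid d_r$ (both groups have rank $r$ because the index is finite) such that $(d_1 e_1,\ldots,d_r e_r)$ is a $\bZ$-basis of $\Lambda'$. Since $d_i \mid d_r$, each $d_r e_i$ lies in $\Lambda'$, so $\Lambda(d_r) = d_r\Lambda < \Lambda'$; and since $\Lambda' < \Lambda(n)$ forces $n \mid d_i$ for every $i$ (each $d_i e_i$ lies in $n\Lambda$), we get $n \mid d_r$, hence $N := d_r \geq n$ works.

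I do not expect a serious obstacle here; the only point requiring a moment's care is ensuring the output integer $N$ is at least $n$ rather than merely positive, and both arguments handle this by exploiting the inclusion $\Lambda' \subset \Lambda(n)$ — through the divisibility $n^r \mid m$ in the first approach, or through $n \mid d_i$ in the second.
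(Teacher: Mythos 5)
Both of your arguments are correct. The paper itself gives no proof of this lemma, merely noting that it is ``an immediate consequence of Schmidt's normal form'' (i.e.\ the Smith normal form for finite-index subgroups of free abelian groups), so your second argument --- choosing a basis $(e_1,\ldots,e_r)$ of $\Lambda$ with $(d_1e_1,\ldots,d_re_r)$ a basis of $\Lambda'$, taking $N=d_r$, and extracting $n\mid d_i$ from the inclusion $\Lambda'<\Lambda(n)$ --- is precisely the intended route, carried out correctly. Your first argument is a genuinely different and more elementary alternative: it bypasses normal forms entirely by applying Lagrange's theorem to the finite quotient $\Lambda/\Lambda'$ to get $\Lambda(m)<\Lambda'$ for $m=[\Lambda:\Lambda']$, and secures $m\geq n$ from the divisibility $n^r\mid m$ given by the tower $\Lambda'<\Lambda(n)<\Lambda$. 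The Lagrange route is shorter and self-contained; the normal-form route gives the sharper (indeed optimal) value $N=d_r$, the exponent of $\Lambda/\Lambda'$, though nothing in the paper requires this optimality. Either proof suffices for the application in Proposition~\ref{PropositionShrinkingRationalSpec}.
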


\subsubsection{Primitive vectors}

A vector $\lambda \in \Lambda$ is \emph{primitive} if there are elements $\lambda_2,\ldots,\lambda_r \in \Lambda$ such that $\{\lambda,\ldots,\lambda_r\}$ is a $\bZ$-basis of $\Lambda$. If we fix a basis $\mathscr{B} = \{\beta_1,\ldots,\beta_r\}$ and write
\[
\lambda = \sum_{k=1}^r m_k \beta_k,
\]
then it is well-known (see e.g. \cite[Theorem 32]{Siegel}) that $\lambda$ is primitive if an only if $\gcd(m_1,\ldots,m_r) = 1$. We denote the set of primitive vectors in $\Lambda$ by $\cP_\Lambda$.

\subsubsection{Dual groups and rational spectrum}

Let $S^1 \subset \bC^*$ denote the (multiplicative) circle group and let $\widehat{\Lambda} = \Hom(\Lambda,S^1)$ denote the dual group of $\Lambda$. Note that $\widehat{\Lambda}$ is a closed (hence compact and metric) subgroup of the countable product $(S^1)^\Lambda$. We define the \emph{rational spectrum $\rat(\Lambda)$} of $\widehat{\Lambda}$ by
\[
\rat(\Lambda) = \{ \xi \in \widehat{\Lambda} \, : \, \xi|_{\Lambda_o} = 1 \enskip \textrm{for some finite-index subgroup $\Lambda_o < \Lambda$} \}.
\]
Note that the trivial character $1$ always belongs to $\rat(\Lambda)$. If $H$ is a subgroup of $\Lambda$, we write $H^\perp$ for its \emph{annihilator}, defined by
\[
H^\perp = \{ \xi \in \widehat{\Lambda} \, : \, \xi(\lambda) = 1 \enskip \textrm{for all $\lambda \in H$} \}.
\] 
Note that $H^\perp$ is always a closed (hence compact) subgroup of $\widehat{\Lambda}$.
If $\lambda \in \cP_\Lambda$, we denote by $L_\lambda = \bZ \lambda$ the cyclic subgroup generated by $\lambda$. Note that 
\[
L_\lambda^{\perp} = \{ \xi \in \widehat{\Lambda} \, : \, \xi(\lambda) = 1 \}.
\]

\subsubsection{Haystacks}

\begin{definition}[Haystack]
An infinite subset $\cH \subset \cP_\Lambda$ is a \emph{haystack} if for every $r$-tuple $(\lambda_1,\ldots,\lambda_r)$ of distinct elements in $\cH$, the subgroup $\linspan_{\bZ}(\lambda_1,\ldots,\lambda_r)$ has finite index in $\Lambda$.
\end{definition}

\begin{remark}
\label{RmkHaystack}
Note that if $\cH$ is a haystack, then
\[
\bigcap_{\lambda \in F} L_{\lambda}^\perp \subseteq \rat(\Lambda), \quad \textrm{for every $F \subset \cH$ with $|F| \geq r$}.
\]
Indeed, if $\lambda_1,\ldots,\lambda_r$ generates a finite-index subgroup $\Lambda_o < \Lambda$, then 
\[
\bigcap_{k=1}^r L_{\lambda_k}^\perp = \widehat{\Lambda}_o^{\perp},
\]
and $\widetilde{\Lambda}_o^{\perp}$ is clearly contained in $\rat(\Lambda)$.
\end{remark}

Haystacks can be produced in many different ways. The next lemma provides an explicit construction. 

\begin{lemma}
Let $\mathscr{B} = \{\beta_1,\ldots,\beta_r\}$ be a basis of $\Lambda$ and let 
$1 < m_1 < \ldots < m_r$ be relatively prime integers. Then,
\[
\cH = \Big\{ \sum_{k=1}^r m_k^n \cdot \beta_k \, : \, n \geq 1 \Big\} 
\]
is a haystack in $\cP_\Lambda$. 
\end{lemma}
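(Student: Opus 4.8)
The plan is to verify the two defining properties of a haystack separately: that $\cH$ is an \emph{infinite} subset of $\cP_\Lambda$, and that any $r$ distinct elements of $\cH$ generate a finite-index subgroup of $\Lambda$. Throughout, write $\lambda_n = \sum_{k=1}^r m_k^n \beta_k$, so that $\cH = \{\lambda_n : n \geq 1\}$. The easy points come first: since $m_1 > 1$, the coefficient $m_1^n$ of $\beta_1$ in $\lambda_n$ is strictly increasing in $n$, so $n \mapsto \lambda_n$ is injective and $\cH$ is infinite; and by the primitivity criterion recalled above (cf. \cite[Theorem 32]{Siegel}), $\lambda_n$ is primitive iff $\gcd(m_1^n,\ldots,m_r^n) = 1$. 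A prime $p$ divides $\gcd(m_1^n,\ldots,m_r^n)$ iff $p \mid m_k$ for every $k$, i.e.\ iff $p \mid \gcd(m_1,\ldots,m_r) = 1$ (the latter equality holds since the $m_k$ are relatively prime), which is impossible; hence every $\lambda_n \in \cP_\Lambda$ and $\cH \subseteq \cP_\Lambda$.

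Next I would reduce the finite-index condition to a determinant computation. Fix distinct elements $\lambda_{n_1},\ldots,\lambda_{n_r} \in \cH$, ordered so that $1 \leq n_1 < n_2 < \cdots < n_r$. Using the basis $\mathscr{B}$ to identify $\Lambda$ with $\bZ^r$, the subgroup $\linspan_{\bZ}(\lambda_{n_1},\ldots,\lambda_{n_r})$ has finite index in $\Lambda$ exactly when the $r$ coordinate vectors of the $\lambda_{n_j}$ are $\bQ$-linearly independent, i.e.\ when the $r \times r$ matrix $M$ with $M_{j,k} = m_k^{n_j}$ is invertible. So the lemma reduces to showing $\det M \neq 0$.

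Finally, $M$ is a generalized Vandermonde matrix, and I would argue $\det M \neq 0$ as follows. If $\det M = 0$, there is a nonzero $(c_1,\ldots,c_r) \in \bR^r$ with $\sum_{k=1}^r c_k m_k^{n_j} = 0$ for every $j$, so the real exponential polynomial $g(t) = \sum_{k=1}^r c_k m_k^{t}$ has the $r$ distinct zeros $n_1 < \cdots < n_r$. But a nonzero exponential polynomial $\sum_{k=1}^r c_k a_k^{t}$ with distinct positive bases has at most $r-1$ real zeros; this follows by induction on $r$ via Rolle's theorem, since dividing by $a_r^{t}$ leaves the zero set unchanged and differentiating then annihilates the top (now constant) term, producing an exponential polynomial in the $r-1$ distinct positive bases $a_k/a_r$ with strictly fewer nonzero terms, to which the inductive bound applies. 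As $m_1 < \cdots < m_r$ are distinct, this contradicts $g$ having $r$ zeros, so $\det M \neq 0$. Alternatively one can invoke the bialternant identity $\det(x_i^{\mu_j + r - j})_{i,j} = s_\mu(x)\prod_{i<j}(x_i - x_j)$ after rewriting the exponents $n_j$ (reindexed in decreasing order) in the form $\mu_j + r - j$ for a partition $\mu$ with positive parts: the right-hand side is strictly positive for $0 < x_1 < \cdots < x_r$ because Schur polynomials have non-negative integer coefficients. Combining the three steps yields that $\cH$ is a haystack in $\cP_\Lambda$.

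The only genuinely non-routine ingredient is the nonvanishing of the generalized Vandermonde determinant; the primitivity check and the identification of ``finite index'' with ``nonsingular matrix'' are purely bookkeeping. It is worth noting that distinctness of the $m_k$ alone suffices for the determinant step, while the relative primality of the $m_k$ is used only to ensure that the elements of $\cH$ lie in $\cP_\Lambda$.
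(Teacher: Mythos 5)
Your proof is correct, and its overall architecture matches the paper's: check that relative primality of the $m_k$ puts $\cH$ inside $\cP_\Lambda$, then reduce the finite-index condition for $r$ distinct elements to the nonvanishing of the generalized Vandermonde determinant $\det\big(m_k^{n_j}\big)_{j,k}$. Where you genuinely diverge is in how that determinant is shown to be nonzero. The paper disposes of it in one line by citing Karlin's theorem that the kernel $(x,y)\mapsto e^{xy}$ is strictly totally positive, so that $\det\big(e^{n_j\ln m_k}\big)>0$ for increasing $n_j$ and $m_k$. You instead give two self-contained arguments: the Rolle/Descartes induction showing that a nonzero exponential polynomial $\sum_k c_k a_k^{t}$ with distinct positive bases has fewer real zeros than nonzero terms, and alternatively the bialternant identity expressing the determinant as a Schur polynomial (with positive value at positive arguments) times the ordinary Vandermonde factor. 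Both are sound; your reindexing $\mu_j = n_{r+1-j} - r + j$ does produce a genuine partition, and the Rolle induction correctly handles the case where some $c_k$ vanish. What your route buys is independence from the total-positivity literature at the cost of a longer argument; what the paper's citation buys is brevity plus the slightly stronger conclusion that the determinant is positive (not merely nonzero), though only nonvanishing is needed here. Your closing observation that distinctness of the $m_k$ suffices for the determinant step, with relative primality used only for primitivity, is accurate and is implicit in the paper's proof as well.
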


\begin{proof}
First note that since $m_1,\ldots,m_r$ are relatively prime, $\cH$ is contained in $\cP_\Lambda$. Thus, to prove that $\cH$ is a haystack, it is enough to show 
that every set of $r$ distinct elements $\lambda_1,\ldots,\lambda_r \in \cH$ is linearly independent over $\bR$. \\

Fix an $r$-tuple $\lambda_1,\ldots,\lambda_r \in \cH$, and let $q_1,\ldots,q_r$ be real numbers such that
\begin{equation}
\label{qzero}
\sum_{j=1}^r q_j \lambda_j = 0.
\end{equation}
We want to show that $q_1 = \ldots = q_r = 0$. After possibly permuting indices, we can find 
integers $1 \leq n_1 < n_2 < \ldots < n_r$ such that
\[
\lambda_j = \sum_{k=1}^r m_k^{n_j} \beta_k, \quad \textrm{for all $j=1,\ldots,r$}.
\]
Then, since $\{\beta_1,\ldots,\beta_r\}$ is a basis, it follows from \eqref{qzero} that
\[
\sum_{j=1}^r q_j m_k^{n_j} = 0, \quad \textrm{for all $k=1,\ldots,r$},
\]
or equivalently, the vector $q = (q_1,\ldots,q_r)$ belongs to the (left) kernel of the $r \times r$-matrix 
\[
A = \{ e^{n_j \ln(m_k)} \}_{j,k=1}^r.
\]
However, by \cite[Chapter 1, Paragraph 2]{K}, the kernel $(x,y) \mapsto e^{xy}$ is strictly totally positive on $\bR \times \bR$, and thus $\det(A) > 0$ for all $m_1 < \ldots < m_r$ and $n_1 < \ldots < n_r$. We conclude that $q = 0$.
\end{proof}

\subsection{Ergodic theory of free abelian groups}

We say that a standard Borel probability space $(X,\mu)$ is a \emph{$\Lambda$-space}
if $X$ is equipped with a measurable $\Lambda$-action which preserves $\mu$, and we
say that $(X,\mu)$ is an \emph{ergodic $\Lambda$-space} if $\mu$ is also ergodic with
respect to this action.

\subsubsection{Spectral measures and normalized spectral measures}

Let $(X,\mu)$ be an ergodic $\Lambda$-space and let $B \subset X$ be a $\mu$-measurable set with positive $\mu$-measure. Bochner's Theorem tells us that there is a unique finite and non-negative Borel measure $\sigma_B$ on $\widehat{\Lambda}$ such that
\[
\mu(B \cap \lambda.B) = \int_{\widehat{\Lambda}} \xi(\lambda) \, d\sigma_B(\xi), \quad \textrm{for all $\lambda \in \Lambda$}.
\]
If we wish to emphasize the dependence on the measure $\mu$, we write $\sigma_{\mu,B}$. 
Recall that if $\lambda \in \Lambda$, then $L_\lambda$ denotes the cyclic subgroup 
$\bZ \lambda < \Lambda$.
\begin{lemma}[Spectral measures and annihilators]
\label{LemmaSpectralMeas}
Let $(X,\mu)$ be an ergodic $\Lambda$-space and let $B \subset X$ be a $\mu$-measurable set with positive $\mu$-measure. Then, $\sigma_B(\{1\}) = \mu(B)^2$ and $\sigma_B(\widehat{\Lambda}) = \mu(B)$, and for every $\lambda \in \Lambda$, 
\[
\sigma_{B}(L_\lambda^{\perp}) = \int_X \bE_\mu[\chi_B \, | \, \cE_{\bZ \lambda}]^2 \, d\mu,
\]
where $\cE_{\bZ \lambda}$ denotes the sub-$\sigma$-algebra of $\mathscr{B}_X$ consisting of $\mu$-almost $\bZ\lambda$-invariant subsets.
\end{lemma}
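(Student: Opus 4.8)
The plan is to compute the Ces\`aro averages of the positive-definite function $\lambda\mapsto\mu(B\cap\lambda.B)$ in two ways --- spectrally via $\sigma_B$, and dynamically via the mean ergodic theorem --- and then match the two limits. Fix a $\bZ$-basis of $\Lambda$, identifying $\Lambda\cong\bZ^r$, and let $U_\lambda$ denote the Koopman operator $(U_\lambda f)(x)=f((-\lambda).x)$ on $L^2(X,\mu)$, so that $\lambda\mapsto U_\lambda$ is a unitary representation of $\Lambda$ and $\langle\chi_B,U_\lambda\chi_B\rangle=\mu(B\cap\lambda.B)=\int_{\widehat\Lambda}\xi(\lambda)\,d\sigma_B(\xi)$ for all $\lambda\in\Lambda$. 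Taking $\lambda=0$ immediately gives $\sigma_B(\widehat\Lambda)=\mu(B\cap B)=\mu(B)$.

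For the remaining two identities I would prove the single statement $\sigma_B(H^\perp)=\|P_H\chi_B\|_{L^2(\mu)}^2$ for every subgroup $H\le\Lambda$, where $P_H$ is the orthogonal projection of $L^2(X,\mu)$ onto the closed subspace of $H$-invariant vectors. Pick an averaging sequence $F_N$ in $H$; concretely $F_N=\{0,\lambda,\dots,(N-1)\lambda\}$ when $H=\bZ\lambda=L_\lambda$, and $F_N=\{-N,\dots,N\}^r$ when $H=\Lambda$. For each fixed $\xi\in\widehat\Lambda$ the average $|F_N|^{-1}\sum_{\gamma\in F_N}\xi(\gamma)$ is bounded by $1$ and converges to $1$ if $\xi\in H^\perp$ and to $0$ otherwise: in the cyclic case it is the geometric sum $N^{-1}\sum_{j=0}^{N-1}\xi(\lambda)^j$, and in the case $H=\Lambda$ it factors as a product of normalized Dirichlet kernels. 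Since $\sigma_B$ is a finite measure, dominated convergence yields
\[
\frac{1}{|F_N|}\sum_{\gamma\in F_N}\mu(B\cap\gamma.B)=\int_{\widehat\Lambda}\Big(\frac{1}{|F_N|}\sum_{\gamma\in F_N}\xi(\gamma)\Big)\,d\sigma_B(\xi)\longrightarrow\sigma_B(H^\perp).
\]
On the other hand, the mean ergodic theorem gives $|F_N|^{-1}\sum_{\gamma\in F_N}U_\gamma\chi_B\to P_H\chi_B$ in $L^2(\mu)$, so pairing with $\chi_B$ and using that $P_H$ is a self-adjoint idempotent, the same average tends to $\langle\chi_B,P_H\chi_B\rangle=\|P_H\chi_B\|_{L^2(\mu)}^2$. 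Comparing the two limits proves the claim.

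Finally I identify $P_H\chi_B$ in the two relevant cases. For $H=\Lambda$, ergodicity of $\mu$ forces the $\Lambda$-invariant vectors to be constants, so $P_\Lambda\chi_B=\mu(B)$ and hence $\sigma_B(\{1\})=\sigma_B(\Lambda^\perp)=\mu(B)^2$. For $H=L_\lambda$, the $\bZ\lambda$-invariant vectors of $L^2(\mu)$ are exactly the $\cE_{\bZ\lambda}$-measurable functions, so $P_{L_\lambda}\chi_B=\bE_\mu[\chi_B\,|\,\cE_{\bZ\lambda}]$ and therefore $\sigma_B(L_\lambda^\perp)=\int_X\bE_\mu[\chi_B\,|\,\cE_{\bZ\lambda}]^2\,d\mu$; the degenerate choices $\lambda=0$ and non-primitive $\lambda$ need no separate treatment. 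There is no real obstacle here: the only points deserving a word of justification are the interchange of limit and integral against $\sigma_B$ (finiteness of $\sigma_B$ together with the uniform bound $1$) and the standard fact that the fixed space of the Koopman operator $U_\lambda$ coincides with the space of $\cE_{\bZ\lambda}$-measurable functions, on which conditional expectation acts as the orthogonal projection.
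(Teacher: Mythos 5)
Your proposal is correct and follows essentially the same route as the paper: compute the Ces\`aro averages of $\lambda\mapsto\mu(B\cap\lambda.B)$ spectrally via dominated convergence against $\sigma_B$ and dynamically via the mean ergodic theorem, then identify the limit projection as the constant $\mu(B)$ (by ergodicity) for $H=\Lambda$ and as $\bE_\mu[\chi_B\,|\,\cE_{\bZ\lambda}]$ for $H=L_\lambda$. Packaging the two cases as a single statement $\sigma_B(H^\perp)=\|P_H\chi_B\|_2^2$ is a harmless uniformization of what the paper does separately.
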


\begin{proof}
First note that since $\xi(0) = 1$ for all $\xi \in \widehat{\Lambda}$, we have
\[
\mu(B) = \mu(B \cap 0.B) = \int_{\widehat{\Lambda}} 1 \, d\sigma_B(\xi) = \sigma(\widehat{\Lambda}).
\]
Let $(F_n)$ be a F\o lner sequence in $\Lambda$. Then, by the weak ergodic theorem,
\[
\lim_{n \ra \infty} \frac{1}{|F_n|} \sum_{\lambda \in F_n} \mu(B \cap \lambda.B) = \mu(B)^2,
\]
and, for $\xi \in \widehat{\Lambda}$,
\[
\lim_{n \ra \infty} \frac{1}{|F_n|} \sum_{\lambda \in F_n} \xi(\lambda) = 
\left\{ 
\begin{array}{ll}
1 & \textrm{if $\xi = 1$} \\[0.1cm]
0 & \textrm{if $\xi \neq 1$}
\end{array}
\right..
\]
Hence, by dominated convergence,
\[
\lim_{n \ra \infty} \frac{1}{|F_n|} \sum_{\lambda \in F_n} \mu(B \cap \lambda.B) 
= \lim_{N \ra \infty} \int_{\widehat{\Lambda}} \Big( \frac{1}{|F_n|} \sum_{\lambda \in F_n} \xi(\lambda) \Big) \, d\sigma_B(\xi) = \sigma_B(\{1\}),
\]
which shows that $\sigma_B(\{1\}) = \mu(B)^2$. Fix $\lambda \in \Lambda$ and consider the restriction of the action to the subgroup $L_\lambda = \bZ \lambda$. We denote by 
$\cE_{\bZ \lambda}$ the sub-$\sigma$-algebra of $\mathscr{B}_X$ consisting of $\mu$-almost $\bZ\lambda$-invariant subsets. Geometric summation tells us that
\[
\lim_{n \ra \infty} \frac{1}{n} \sum_{k=0}^{n-1} \xi(k\lambda) = 
\lim_{n \ra \infty} \frac{1}{n} \sum_{k=0}^{n-1} \xi(\lambda)^k = 
\left\{ 
\begin{array}{ll}
1 & \textrm{if $\xi(\lambda) = 1$} \\[0.1cm]
0 & \textrm{if $\xi(\lambda) \neq 1$},
\end{array}
\right.,
\]
or equivalently,
\[
\lim_{n \ra \infty} \frac{1}{n} \sum_{k=0}^{n-1} \xi(k\lambda) = \chi_{L_\lambda^{\perp}}(\xi), \quad \textrm{for all $\xi \in \Lambda^\perp$}.
\]
We conclude, by dominated convergence, 
\[
\sigma_{B}(L_\lambda^\perp) = \lim_{n \ra \infty} \frac{1}{n} \sum_{k=0}^{n-1} \mu(B \cap k\lambda.B), \quad \textrm{for all $\lambda \in \Lambda$}.
\]
On the other hand, by the weak ergodic theorem and using that conditional expectations are projections on $L^2(X,\mu)$,
\[
\lim_{n \ra \infty} \frac{1}{n} \sum_{k=0}^{n-1} \mu(B \cap k\lambda.B)
= \langle \chi_B, \bE_\mu[\chi_B | \cE_{\bZ \lambda}] \rangle_{L^2(X,\mu)} 
= \int_{X} \bE_\mu[\chi_B | \cE_{\bZ \lambda}]^2 \, d\mu,
\]
which finishes the proof.
\end{proof}

\subsubsection{Ergodic sets}

\begin{definition}[Ergodic set]
A set $S \subset \bZ$ is \emph{ergodic} if there exists an increasing 
sequence $(S_N)$ of finite subsets of $S$ such that for every $\bZ$-space $(Y,\nu)$
and $f \in \mathscr{L}^\infty(Y)$, there is a $\nu$-conull set $Y_f \subset Y$
such that
\[
\lim_{N \ra \infty} \frac{1}{|S_N|} \sum_{m \in S_N} f(m.y) = \bE_\nu[f \, | \, \cE](y), \quad \textrm{for all $y \in Y_f$},
\]
where $\cE$ denotes the sub-$\sigma$-algebra of $\nu$-almost $\bZ$-invariant sets.
\end{definition}

Birkhoff's ergodic theorem tells us that $\bZ$ is an ergodic set. However, there are also quite sparse ergodic subsets of $\bZ$ (see for instance \cite{BW} for a plethora of examples).

\section{Proof of Theorem \ref{ThmIntroErg}}

Let $\Lambda$ be a free abelian group of rank $r$ and let $(X,\mu)$ be an ergodic $\Lambda$-space. 
Let $B \subset X$ be a $\mu$-measurable set with positive $\mu$-measure. We recall that the normalized spectral measure $\widetilde{\sigma}_B$ is defined as
\[
\widetilde{\sigma}_B = \frac{\sigma_B}{\sigma_B(\{1\})},
\]
where $\sigma_B$ is the unique finite and non-negative measure on $\widehat{\Lambda}$
which satisfies
\[
\mu(B \cap \lambda.B) = \int_{\widehat{\Lambda}} \xi(\lambda) \, d\sigma_B(\xi), \quad \textrm{for all $\lambda \in \Lambda$}.
\]
If we wish to emphasize the dependence on the measure $\mu$, we write $\widetilde{\sigma}_{\mu,B}$. \\

In this section we prove the following generalization of Theorem \ref{ThmIntroErg}. Recall that $\cP_\Lambda$ denotes the set of primitive vectors in $\Lambda$. For the definitions of haystack, ergodic set and $\rat(\Lambda)$, we refer the reader to Section \ref{sec:prel}.

\begin{theorem}
\label{ThmMainErg}
Suppose that $\widetilde{\sigma}_{B}(\rat(\Lambda) \setminus \{1\}) \leq \eps_o$. Then, for all $\eps > \eps_o$ and for every haystack $\cH \subset \cP_\Lambda$, 
there exists $\lambda_\eps \in \cH$ such that
\[
\mu(S\lambda_\eps.B) > 1 - \eps,
\]
for every ergodic set $S \subset \bZ$. 
\end{theorem}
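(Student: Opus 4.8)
The plan is to reduce the statement about $\mu(S\lambda.B)$ to a statement about the spectral measure $\widetilde\sigma_B$ evaluated on the annihilators $L_\lambda^\perp$. First I would observe that for a set $S\subset\bZ$ that is ergodic, with defining Følner-type sequence $(S_N)$, Lemma \ref{LemmaSpectralMeas} combined with the pointwise convergence $\tfrac{1}{|S_N|}\sum_{m\in S_N}\chi_B(m\lambda.x)\to\bE_\mu[\chi_B\mid\cE_{\bZ\lambda}](x)$ shows that
\[
\mu\big(X\setminus S\lambda.B\big)\;=\;\mu\big(\{x : \bE_\mu[\chi_B\mid\cE_{\bZ\lambda}](x)=0\}\big),
\]
and by Chebyshev this is at most $1-\big(\int\bE_\mu[\chi_B\mid\cE_{\bZ\lambda}]\,d\mu\big)^2/\int\bE_\mu[\chi_B\mid\cE_{\bZ\lambda}]^2\,d\mu=1-\mu(B)^2/\sigma_B(L_\lambda^\perp)$. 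Hence it suffices to find $\lambda_\eps\in\cH$ with $\sigma_B(L_{\lambda_\eps}^\perp)<\mu(B)^2/(1-\eps)$, i.e. $\widetilde\sigma_B(L_{\lambda_\eps}^\perp)<1/(1-\eps)$, equivalently $\widetilde\sigma_B(L_{\lambda_\eps}^\perp\setminus\{1\})<\frac{1}{1-\eps}-1=\frac{\eps}{1-\eps}$. Note the choice of $\lambda_\eps$ will then work simultaneously for every ergodic set $S$, since the bound depends only on $\lambda_\eps$ and $B$.

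Next I would exploit the haystack property. Enumerate $\cH=\{\lambda_1,\lambda_2,\dots\}$. The key combinatorial input (Remark \ref{RmkHaystack}) is that $\bigcap_{\lambda\in F}L_\lambda^\perp\subseteq\rat(\Lambda)$ for any $F\subset\cH$ with $|F|\ge r$. I want to upgrade this to a quantitative smallness statement: I claim that for all but finitely many $\lambda\in\cH$ one has $\widetilde\sigma_B(L_\lambda^\perp\setminus\{1\})$ as small as we like. The argument: consider the sets $A_j=L_{\lambda_j}^\perp\setminus(\rat(\Lambda))$ (closed minus a Borel piece) — actually cleaner to work with $L_{\lambda_j}^\perp\setminus\{1\}$ split into its intersection with $\rat(\Lambda)$ and its complement. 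For the non-rational part: since any $r$ distinct elements of $\cH$ have annihilators intersecting inside $\rat(\Lambda)$, the sets $\{L_{\lambda_j}^\perp\setminus\rat(\Lambda)\}_{j\ge1}$ have the property that no point lies in $r$ of them; hence $\sum_j\widetilde\sigma_B(L_{\lambda_j}^\perp\setminus\rat(\Lambda))\le (r-1)\widetilde\sigma_B(\widehat\Lambda)<\infty$, so these terms tend to $0$ along $\cH$. For the rational part, $\widetilde\sigma_B(L_{\lambda_j}^\perp\cap(\rat(\Lambda)\setminus\{1\}))\le\widetilde\sigma_B(\rat(\Lambda)\setminus\{1\})\le\eps_o$ for every $j$ — this is the term that cannot be made smaller than $\eps_o$, which is exactly why the hypothesis enters and why the conclusion is $\eps>\eps_o$ rather than arbitrary $\eps$.

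To push the rational part below $\eps$ rather than merely $\eps_o$, I would refine: the finitely-supported-overlap argument applied to the family $\{L_{\lambda_j}^\perp\cap\rat(\Lambda)\}$ shows that for each character $\xi\in\rat(\Lambda)\setminus\{1\}$, the set of indices $j$ with $\xi\in L_{\lambda_j}^\perp$ is finite (at most $r-1$ of them, by the haystack property again, since otherwise $r$ of the $\lambda_j$ would all annihilate $\xi$, forcing $\xi$ to annihilate a finite-index subgroup — allowed — but then $\xi\in\bigcap$ of $r$ annihilators which is $\widehat{\Lambda_o}^\perp$, a finite group, and one checks $\xi=1$... I need to be careful here). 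More robustly: fix a large finite $\Lambda_o$-invariant decomposition so that $\rat(\Lambda)\setminus\{1\}=\bigsqcup$ over finite-index subgroups; on each atom the overlap is bounded, so $\liminf_{j}\widetilde\sigma_B(L_{\lambda_j}^\perp\setminus\{1\})\le\eps_o$. Combining with the non-rational estimate, $\liminf_{j\to\infty}\widetilde\sigma_B(L_{\lambda_j}^\perp\setminus\{1\})\le\eps_o<\eps$, so some $\lambda_\eps=\lambda_j\in\cH$ satisfies $\widetilde\sigma_B(L_{\lambda_\eps}^\perp\setminus\{1\})<\frac{\eps}{1-\eps}$ after shrinking $\eps$ slightly, or more cleanly we just need it $<\eps'$ where $\eps'$ is chosen with $\eps_o<\eps'$ and $\frac{\eps'}{1-\eps'}$ small enough — rearranging, the requirement is $\sigma_B(L_{\lambda_\eps}^\perp)<\mu(B)^2/(1-\eps)$ and since $\sigma_B(L_{\lambda_\eps}^\perp)=\mu(B)^2(1+\widetilde\sigma_B(L_{\lambda_\eps}^\perp\setminus\{1\}))\le\mu(B)^2(1+\eps_o+o(1))$, for $\eps>\eps_o$ this holds for all large $j$. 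Then $\lambda_\eps\in\cH\subset\cP_\Lambda$ is primitive, and the first paragraph gives $\mu(S\lambda_\eps.B)>1-\eps$ for every ergodic $S$.

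I expect the main obstacle to be the quantitative overlap argument for the rational spectrum — making rigorous that $\liminf_j\widetilde\sigma_B(L_{\lambda_j}^\perp\setminus\{1\})\le\eps_o$. The subtlety is that $\rat(\Lambda)$ is a countable union of finite subgroups $\widehat{\Lambda_o}^\perp$, and while the haystack condition bounds how many $L_{\lambda_j}^\perp$ can simultaneously contain a given $\xi\ne1$, one must sum/control this uniformly over the infinitely many atoms carrying $\widetilde\sigma_B$-mass; a clean way is to fix $\delta>0$, pick a finite-index $\Lambda_o$ with $\widetilde\sigma_B\big((\rat(\Lambda)\setminus\{1\})\setminus\widehat{\Lambda_o}^\perp\big)<\delta$, handle the finite group $\widehat{\Lambda_o}^\perp\setminus\{1\}$ by the pigeonhole/bounded-overlap bound, and absorb the rest into $\delta$. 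This is the step deserving the most care in the full write-up.
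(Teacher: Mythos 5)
Your proposal is correct and follows essentially the same route as the paper: the Cauchy--Schwarz bound $\mu(S\lambda.B)\ge 1/\widetilde{\sigma}_B(L_\lambda^{\perp})$ is the paper's Lemma \ref{LemmaExpansionSpectral}, and your bounded-overlap argument for the non-rational part of $L_{\lambda_j}^{\perp}\setminus\{1\}$ is the paper's Lemma \ref{LemmaHaystacksAnnihilators} (proved there via the pigeonhole Lemma \ref{LemmaSmallsetsFromSmallIntersections}). The ``main obstacle'' you flag at the end is a non-issue: monotonicity alone gives $\widetilde{\sigma}_B\big(L_{\lambda_j}^{\perp}\cap(\rat(\Lambda)\setminus\{1\})\big)\le\eps_o$ for every $j$, and since the target threshold $\eps/(1-\eps)$ strictly exceeds $\eps_o$, there is room to absorb the vanishing non-rational contribution, so no refinement over the rational spectrum is needed.
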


In the next subsection we will show how Theorem \ref{ThmMainErg} can be deduced from the following two lemmas, whose proofs are given at the end of this section. Recall
that if $\lambda \in \cP_\Lambda$, then $L_\lambda$ denotes the cyclic subgroup $\bZ \lambda$, and $L_\lambda^{\perp}$ its annihilator in $\widehat{\Lambda}$, i.e. 
\[
L_\lambda^{\perp} = \{ \xi \in \widehat{\Lambda} \, : \, \xi(\lambda) = 1 \}.
\]

\begin{lemma}[Expansion and spectral measures]
\label{LemmaExpansionSpectral}
For every $\mu$-measurable set $B \subset X$ with positive $\mu$-measure and for 
every $\lambda \in \Lambda$, 
\[
\mu(S\lambda.B) \geq \frac{1}{\widetilde{\sigma}_B(L_\lambda^{\perp})},
\]
for every ergodic set $S \subset \bZ$.
\end{lemma}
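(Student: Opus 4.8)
The plan is to reduce the bound to an elementary $L^2$-estimate for the conditional expectation $f := \bE_\mu[\chi_B \mid \cE_{\bZ\lambda}]$ onto the $\sigma$-algebra of $\mu$-almost $\bZ\lambda$-invariant sets. The case $\lambda = 0$ is degenerate and immediate: then $L_\lambda^\perp = \widehat{\Lambda}$, so $\widetilde{\sigma}_B(L_\lambda^\perp) = \sigma_B(\widehat{\Lambda})/\sigma_B(\{1\}) = \mu(B)/\mu(B)^2 = 1/\mu(B)$ by Lemma \ref{LemmaSpectralMeas}, while $S\lambda.B = B$; so assume $\lambda \neq 0$. Then $m \mapsto (-m\lambda).x$ is a measure-preserving $\bZ$-action on $(X,\mu)$, and since a set is $\mu$-almost invariant under $\lambda$ if and only if it is $\mu$-almost invariant under $-\lambda$, its $\sigma$-algebra of almost-invariant sets is exactly $\cE_{\bZ\lambda}$.

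The first and only substantive step is to show that, modulo a $\mu$-null set, $\{f > 0\} \subseteq S\lambda.B$. Here I would let $(S_N)$ be a sequence of finite subsets of $S$ witnessing that $S$ is an ergodic set, and apply the defining property of an ergodic set to the $\bZ$-action $m \mapsto (-m\lambda).x$ above and to $\chi_B \in \mathscr{L}^\infty(X)$: this yields a $\mu$-conull set on which $\frac{1}{|S_N|}\sum_{m \in S_N} \chi_B((-m\lambda).y) \to f(y)$. Now $S\lambda.B = \bigcup_{s \in S}(s\lambda).B$ is a countable union of measurable sets, hence measurable, and if $y \notin S\lambda.B$ then $(-s\lambda).y \notin B$ for every $s \in S$, so every average above is $0$ and therefore $f(y) = 0$. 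Consequently $\{f>0\}\subseteq S\lambda.B$ up to a null set, and in particular $\mu(S\lambda.B) \geq \mu(\{f > 0\})$.

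The remaining steps are one-line estimates. Since conditional expectation preserves the integral, $\mu(B) = \int_X f\, d\mu = \int_{\{f>0\}} f\, d\mu$, so Cauchy--Schwarz gives $\mu(B) \leq \big(\int_X f^2\, d\mu\big)^{1/2}\,\mu(\{f>0\})^{1/2}$, i.e. $\mu(\{f>0\}) \geq \mu(B)^2 / \int_X f^2\, d\mu$. Finally, Lemma \ref{LemmaSpectralMeas} identifies $\int_X f^2\, d\mu = \sigma_B(L_\lambda^\perp)$ and $\mu(B)^2 = \sigma_B(\{1\})$, so combining the inequalities yields $\mu(S\lambda.B) \geq \sigma_B(\{1\})/\sigma_B(L_\lambda^\perp) = 1/\widetilde{\sigma}_B(L_\lambda^\perp)$, which is the claim.

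I expect the main (and essentially the only) obstacle to be the bookkeeping in the first step: one must match the forward Ces\`aro averages furnished by the definition of an ergodic set with the set $S\lambda.B$, which forces one to run the cyclic $\bZ\lambda$-action backwards and to check that this reversal leaves the invariant $\sigma$-algebra $\cE_{\bZ\lambda}$ (and hence $f$) unchanged. Once the inclusion $\{f>0\} \subseteq S\lambda.B$ is in place, the passage through Cauchy--Schwarz and Lemma \ref{LemmaSpectralMeas} is routine.
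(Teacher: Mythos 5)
Your proof is correct and follows the same overall strategy as the paper's: reduce to the inequality $\mu(S\lambda.B) \geq \mu(\{f>0\})$ for $f = \bE_\mu[\chi_B \mid \cE_{\bZ\lambda}]$, then apply Cauchy--Schwarz together with the identities $\int_X f^2\,d\mu = \sigma_B(L_\lambda^\perp)$ and $\mu(B)^2 = \sigma_B(\{1\})$ from Lemma \ref{LemmaSpectralMeas}. The one place where you diverge is the proof of that first inequality: the paper isolates it as a sub-lemma and proves it with an Egorov-type argument (uniform approximation of $f$ by the averages $\bE_n$ on a set of measure $>1-\eps$, followed by a limit over the two parameters $\eps$ and $\delta$), whereas you observe directly that on the conull set where the ergodic averages converge pointwise, any $y \notin S\lambda.B$ has every average equal to zero, hence $f(y)=0$; this gives the inclusion $\{f>0\}\subseteq S\lambda.B$ modulo a null set outright. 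Your version is shorter and avoids Egorov entirely, at no loss of strength. Your explicit check that running the cyclic action backwards does not change $\cE_{\bZ\lambda}$, and your separate (trivial) treatment of $\lambda=0$, are both points the paper leaves implicit.
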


\begin{lemma}[Haystacks and small annihilators]
\label{LemmaHaystacksAnnihilators}
Let $\tau$ be a finite Borel measure on $\widehat{\Lambda}$ such that $\tau(\rat(\Lambda)) = 0$, and let $\cH \subset \cP_\Lambda$ be a haystack. 
Then, for every $\delta > 0$, there exists $\lambda \in \cH$ such that
$\tau(L_\lambda^{\perp}) < \delta$.
\end{lemma}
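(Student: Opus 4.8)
The plan is to argue by contradiction, exploiting the fact that the haystack condition forces the $\limsup$ of the annihilators $L_\lambda^{\perp}$, $\lambda \in \cH$, into the rational spectrum, which is $\tau$-null by hypothesis.

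First I would assume, for contradiction, that $\tau(L_\lambda^{\perp}) \geq \delta$ for \emph{every} $\lambda \in \cH$. Since a haystack is by definition an infinite subset of $\cP_\Lambda$, enumerate $\cH = \{\lambda_1, \lambda_2, \ldots\}$ and set $A_i := L_{\lambda_i}^{\perp}$; each $A_i$ is a closed (hence Borel) subgroup of $\widehat{\Lambda}$ with $\tau(A_i) \geq \delta$.

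Next comes the main point. Consider $A := \limsup_i A_i = \bigcap_{n \geq 1} \bigcup_{i \geq n} A_i$. On the one hand, since $\tau$ is a finite measure, continuity from above applied to the decreasing sequence $\big(\bigcup_{i \geq n} A_i\big)_{n}$ yields the reverse Fatou inequality $\tau(A) \geq \limsup_i \tau(A_i) \geq \delta > 0$. On the other hand, if $x \in A$, then $x$ belongs to $A_i$ for infinitely many $i$, so in particular there are $r$ distinct indices $i_1 < \cdots < i_r$ with $x \in \bigcap_{k=1}^r L_{\lambda_{i_k}}^{\perp}$. By the haystack property, $\linspan_{\bZ}(\lambda_{i_1}, \ldots, \lambda_{i_r})$ has finite index in $\Lambda$, so Remark \ref{RmkHaystack} gives $\bigcap_{k=1}^r L_{\lambda_{i_k}}^{\perp} \subseteq \rat(\Lambda)$, whence $x \in \rat(\Lambda)$. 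Thus $A \subseteq \rat(\Lambda)$, and therefore $\tau(A) \leq \tau(\rat(\Lambda)) = 0$, contradicting $\tau(A) \geq \delta$. Hence there must be some $\lambda \in \cH$ with $\tau(L_\lambda^{\perp}) < \delta$.

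I do not expect a serious obstacle here. The only step requiring care is the passage ``$x$ lies in infinitely many $A_i$ $\Rightarrow$ $x$ lies in $r$ of them simultaneously,'' which is precisely what the full strength of the haystack definition (finite index of \emph{every} $r$-element subset, not merely of pairs) is designed to supply; the remaining ingredient is just the elementary $\limsup$ estimate valid for finite measures.
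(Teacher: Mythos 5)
Your proof is correct. Both arguments hinge on the same key observation: the haystack property together with Remark \ref{RmkHaystack} forces every $r$-fold intersection $\bigcap_{k=1}^{r} L_{\lambda_{i_k}}^{\perp}$ into $\rat(\Lambda)$, hence to be $\tau$-null. Where you diverge from the paper is in how you pass from ``all $r$-fold intersections are null'' to ``some single annihilator has small measure.'' The paper proves a finitary counting lemma (Lemma \ref{LemmaSmallsetsFromSmallIntersections}): if the first $N$ sets all had measure at least $r\,\tau(\widehat{\Lambda})/N$, then $\int \sum_{n=1}^{N}\chi_{A_n}\,d\tau \geq r\,\tau(\widehat{\Lambda})$, so a positive-measure set of points lies in at least $r$ of the $A_n$, contradicting the null intersections. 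You instead argue by contradiction on the whole infinite sequence, using the reverse Fatou inequality $\tau(\limsup_i A_i) \geq \limsup_i \tau(A_i) \geq \delta$ (valid since $\tau$ is finite) and the observation that $\limsup_i A_i \subseteq \rat(\Lambda)$ because a point in infinitely many $A_i$ lies in $r$ of them. Your route is shorter and slicker, but purely qualitative; the paper's version additionally yields an explicit bound $N \approx r\,\tau(\widehat{\Lambda})/\delta$ on how far into an enumeration of the haystack one must search, i.e.\ the good $\lambda$ can be taken among the first $N$ elements. Since only the qualitative existence statement is used downstream (in the proof of Theorem \ref{ThmMainErg}), either argument suffices.
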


\subsection{Proof of Theorem \ref{ThmMainErg} assuming Lemma \ref{LemmaExpansionSpectral} and Lemma \ref{LemmaHaystacksAnnihilators}}
\label{sec:ThmMainErg}
Fix $\eps > \eps_o \geq 0$ and suppose that $B \subset X$ is a $\mu$-measurable set with positive $\mu$-measure such that $\widetilde{\sigma}_B(\rat(\Lambda) \setminus \{1\}) \leq \eps_o$. We can thus write
\[
\widetilde{\sigma}_B = \delta_{1} + \alpha + \tau,
\]
where $\alpha$ is a finite measure supported on $\rat(\Lambda) \setminus \{1\}$ such that $\alpha(\rat(\Lambda) \setminus \{1\}) \leq \eps_o$ and $\tau$ is a finite measure such that $\tau(\rat(\Lambda)) = 0$. \\

Let $S \subset \bZ$ be an ergodic set. By Lemma \ref{LemmaExpansionSpectral}, 
\[
\mu(S\lambda.B) \geq \frac{1}{\widetilde{\sigma}_B(L_\lambda^{\perp})}, \quad \textrm{for all $\lambda \in \Lambda$}.
\]
In particular, using the decomposition of $\widetilde{\sigma}_B$ above, we have
\[
\mu(S\lambda.B) \geq \frac{1}{1 + \eps_o + \tau(L_\lambda^{\perp})}, \quad \textrm{for all $\lambda \in \Lambda$}.
\]
Pick 
\[
0 < \delta < \frac{1-(1+\eps_o)(1-\eps)}{1-\eps}.
\]
Let $\cH$ be a haystack. By Lemma \ref{LemmaHaystacksAnnihilators}, we can find $\lambda \in \cH$ such that
$\tau(L_\lambda^{\perp}) < \delta$, and thus
\[
\mu(S\lambda.B) \geq \frac{1}{1+\eps_o + \delta} > 1 - \eps,
\]
which finishes the proof (with $\lambda_\eps = \lambda$).

\subsection{Proof of Lemma \ref{LemmaExpansionSpectral}}

Let $B \subset X$ be a $\mu$-measurable set with positive $\mu$-measure. 
We will need the following lemma. 

\begin{lemma}
Let $S \subset \bZ$ be an ergodic set. Then, for every $\lambda \in \Lambda$, 
\[
\mu(S\lambda.B) \geq \mu(\{ \bE_\mu[\chi_B \, | \, \cE_{\lambda \bZ}] > 0\}),
\]
where $\cE_{\bZ \lambda}$ denotes the sub-$\sigma$-algebra of $\mathscr{B}_X$ consisting of $\mu$-almost $\bZ\lambda$-invariant subsets.
\end{lemma}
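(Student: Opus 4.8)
The plan is to deduce the inequality directly from the defining property of an ergodic set, applied to the $\bZ$-space obtained by restricting the given $\Lambda$-action to the cyclic subgroup generated by $\lambda$.

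First I would turn $X$ into a $\bZ$-space by declaring that $m\in\bZ$ acts via $m\star x:=(-m\lambda).x$. This is a measure-preserving $\bZ$-action (measure preservation comes from $\Lambda$-invariance of $\mu$), and since $\{-m\lambda:m\in\bZ\}=\bZ\lambda$, its $\sigma$-algebra of $\mu$-almost invariant sets is exactly $\cE_{\bZ\lambda}$; in particular the associated conditional expectation operator coincides with $\bE_\mu[\,\cdot\mid\cE_{\bZ\lambda}]$. (Running the $\bZ$-action through the generator $-\lambda$ rather than $\lambda$ is a harmless convention whose only purpose is to make signs match the definition $S\lambda.B=\bigcup_{m\in S}(m\lambda).B$.)

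Second, since $S$ is an ergodic set, fix an increasing sequence $(S_N)$ of finite subsets of $S$ as in the definition, and apply the defining convergence to the above $\bZ$-space with $f=\chi_B\in\mathscr{L}^\infty(X,\mu)$: there is a $\mu$-conull set $X_0\subseteq X$ with
\[
\lim_{N\to\infty}\frac{1}{|S_N|}\sum_{m\in S_N}\chi_B\big((-m\lambda).x\big)=\bE_\mu[\chi_B\mid\cE_{\bZ\lambda}](x),\qquad x\in X_0.
\]
Third, set $A=\{x\in X:\bE_\mu[\chi_B\mid\cE_{\bZ\lambda}](x)>0\}$ and fix $x\in A\cap X_0$. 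The right-hand side above is strictly positive, so for all large $N$ the average on the left is positive; being an average of $\{0,1\}$-valued numbers it must have a nonzero term, i.e. there is $m\in S_N\subseteq S$ with $(-m\lambda).x\in B$, equivalently $x\in(m\lambda).B\subseteq S\lambda.B$. Hence $A\subseteq S\lambda.B$ up to a $\mu$-null set, and monotonicity of $\mu$ gives $\mu(S\lambda.B)\ge\mu(A)$, as desired.

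There is no real obstacle here: the argument is a direct unwinding of the definition of an ergodic set together with the trivial observation that a positive finite average of indicators must contain a term equal to $1$. The only points requiring a little care are the measurability and null-set bookkeeping — that $\bE_\mu[\chi_B\mid\cE_{\bZ\lambda}]$ is a genuine measurable function, so that $A$ is measurable, and that $S\lambda.B$ is measurable as a countable union of measurable sets — and the sign convention mentioned above, which is why I prefer to run the ergodic-set averages along the generator $-\lambda$.
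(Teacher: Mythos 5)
Your proof is correct and follows the same strategy as the paper's: restrict the action to $\bZ\lambda$, apply the defining convergence of the ergodic set $S$ to $f=\chi_B$, and observe that positivity of a finite average of indicators forces membership in some $(m\lambda).B$. The only real difference is in the endgame: the paper converts the a.e.\ convergence into the measure inequality via Egorov's theorem and an $\eps$--$\delta$ estimate on the level sets $\{\bE_\mu[\chi_B\mid\cE_{\bZ\lambda}]\geq\delta\}$, whereas you obtain the cleaner pointwise inclusion $\{\bE_\mu[\chi_B\mid\cE_{\bZ\lambda}]>0\}\subseteq S\lambda.B$ modulo a null set directly, which is a legitimate simplification.
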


\begin{proof}
Fix $\lambda \in \Lambda$ and consider the action $\bZ \lambda \acts (X,\mu)$. Since $S$ is an ergodic set, there is a sequence $(S_n)$ of finite subsets of $S$ such that
\[
\bE_\mu[\chi_B | \cE_{\bZ \lambda}](x) 
=
\lim_{n \ra \infty} \frac{1}{|S_n|} \sum_{m \in S_n} \chi_B((-m\lambda).x), \quad \textrm{$\mu$-almost everywhere}.
\]
By Egorov's Theorem, for every $\eps > 0$, there exist an integer $N_\eps$ and a 
$\mu$-measurable set $X_\eps \subset X$ with $\mu(X_\eps) > 1-\eps$ such that
\[
\Big| \bE_\mu[\chi_B | \cE_{\bZ \lambda}](x) - \frac{1}{|S_n|} \sum_{m \in S_n} \chi_B((-m\lambda).x) \Big| < \eps, \quad \textrm{for all $n \geq N_\eps$ and $x \in X_\eps$}.
\]
In what follows, we write
\[
\bE_n(x) = \frac{1}{|S_n|} \sum_{m \in S_n} \chi_B((-m\lambda).x).
\]
Fix $\eps > 0$ and $\delta > \eps$. Then,
\begin{align*}
\mu(\{ \bE_\mu[\chi_B | \cE_{\bZ \lambda}] \geq \delta \})
&=
\mu(\{ \bE_\mu[\chi_B | \cE_{\bZ \lambda}] - \bE_n + \bE_n  \geq \delta \} \cap X_\eps) \\[0.2cm]
&+
\mu(\{ \bE_\mu[\chi_B | \cE_{\bZ \lambda}] \geq \delta \} \cap X^c_\eps) \leq 
\mu(\{ \bE_n > \delta-\eps \}) + \eps \\[0.2cm]
&\leq \mu(\{ \bE_n > 0 \}) + \eps,
\end{align*}
for all $n \geq N_\eps$. Note that
\[
\mu(\{ \bE_n > 0 \}) = \mu(S_n\lambda.B) \leq \mu(S\lambda.B).
\]
Hence, since $\delta > \eps > 0$ are arbitrary, we conclude that
\[
\mu(\{ \bE_\mu[\chi_B | \cE_{\bZ \lambda}] > 0 \}) \leq \mu(S\lambda.B),
\]
which finishes the proof.
\end{proof}

To prove Lemma \ref{LemmaExpansionSpectral}, we need to estimate $\mu(\{ \bE_\mu[\chi_B | \cE_{\bZ \lambda}] > 0 \})$. To do this, we use the following trick. Suppose $f$ is a non-negative $\mu$-measurable function on $X$. Then, by the Cauchy-Schwarz inequality,
\[
\int_X f \, d\mu = \int_X f \chi_{\{f > 0\}} \, d\mu \leq \Big(\int_X f^2 \, d\mu\Big)^{1/2} \mu(\{f > 0\})^{1/2},
\]
and thus
\[
\mu(\{f > 0\}) \geq \frac{\Big(\int_X f \, d\mu \Big)^2}{\int_X f^2 \, d\mu}.
\]
If we apply this inequality to $f = \bE_\mu[\chi_B | \cE_{\bZ \lambda}]$, we see that
\[
\mu(\{ \bE_\mu[\chi_B | \cE_{\bZ \lambda}] > 0 \}) \geq 
\frac{\Big( \int_X \bE_\mu[\chi_B | \cE_{\bZ \lambda}] \, d\mu \Big)^2}{\int_X \bE_\mu[\chi_B | \cE_{\bZ \lambda}]^2 \, d\mu} = \frac{\mu(B)^2}{\int_X \bE_\mu[\chi_B | \cE_{\bZ \lambda}]^2 \, d\mu}.
\]
By Lemma \ref{LemmaSpectralMeas},
\[
\sigma_B(\{1\}) = \mu(B)^2 \qand \sigma_B(L_\lambda^{\perp}) = \int_X \bE_\mu[\chi_B | \cE_{\bZ \lambda}]^2 \, d\mu,
\]
so we conclude that
\[
\mu(\{ \bE_\mu[\chi_B | \cE_{\bZ \lambda}] > 0 \}) \geq \frac{\sigma_B(\{1\})}{\sigma_{B}(L_\lambda^\perp)} = \frac{1}{\widetilde{\sigma}_B(L_\lambda^{\perp})}.
\]

\subsection{Proof of Lemma \ref{LemmaHaystacksAnnihilators}}

Let $\tau$ be a finite probability measure on $\widehat{\Lambda}$ such that $\tau(\rat(\Lambda)) = 0$ and let $\cH \subset \cP_\Lambda$ be a haystack. Fix an enumeration $(\lambda_n)$ of the elements in $\cH$. Since $\cH$ is a haystack and $\tau(\rat(\Lambda)) = 0$, we have, in view of Remark \ref{RmkHaystack},
\[
\tau\Big(\bigcap_{n \in F} L_{\lambda}^\perp \Big) = 0, \quad \textrm{for every $F \subset \cH$ with $|F| \geq r$}.
\]
Let $\delta > 0$ and pick an integer $N \geq 1$ such that $r \cdot \tau(\widehat{\Lambda})/N < \delta$. 
Lemma \ref{LemmaHaystacksAnnihilators} is now an immediate consequence of the following general measure-theoretic result, applied to
\[
(Y,\nu) = (\widehat{\Lambda},\tau) \qand A_n = L_{\lambda_n}^\perp \qand p = r.
\]
\begin{lemma}
\label{LemmaSmallsetsFromSmallIntersections}
Let $(Y,\nu)$ be a finite measure space and let $p$ be a positive integer. 
Then, for every positive integer $N$ and for every sequence $(A_n)$ of $\nu$-measurable sets in $Y$ such that 
\begin{equation}
\label{AssIntersections}
\nu\Big( \bigcap_{n \in F} A_n \Big) = 0, \quad \textrm{for every $F \subset \bN$ with $|F| \geq p$},
\end{equation}
there exists an index $n \leq N$ such that $\nu(A_n) < p \cdot \nu(Y)/N$.
\end{lemma}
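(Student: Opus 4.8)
The plan is to prove this by a straightforward double-counting (pigeonhole) argument applied to the first $N$ sets $A_1,\ldots,A_N$. First I would observe that the hypothesis \eqref{AssIntersections}, applied to all $p$-element subsets $F \subset \{1,\ldots,N\}$, forces $\nu$-almost every point of $Y$ to lie in at most $p-1$ of the sets $A_1,\ldots,A_N$. Indeed, the set of points belonging to at least $p$ of them is
\[
Z = \bigcup_{\substack{F \subset \{1,\ldots,N\} \\ |F| = p}} \bigcap_{n \in F} A_n ,
\]
a finite union of $\nu$-null sets, hence $\nu(Z) = 0$. Consequently the $\nu$-measurable function $g = \sum_{n=1}^N \chi_{A_n}$ satisfies $g \leq p-1$ off $Z$, i.e.\ $\nu$-almost everywhere.

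Next I would integrate this pointwise bound over $Y$:
\[
\sum_{n=1}^N \nu(A_n) = \int_Y g \, d\nu \leq (p-1)\,\nu(Y) < p\,\nu(Y).
\]
Averaging over the $N$ indices, there must exist some $n \leq N$ with
\[
\nu(A_n) \leq \frac1N \sum_{m=1}^N \nu(A_m) \leq \frac{(p-1)\,\nu(Y)}{N} < \frac{p\,\nu(Y)}{N},
\]
which is exactly the asserted conclusion.

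There is no genuine obstacle here; the argument is elementary pigeonhole. The only points deserving a word of care are that $g$ is measurable (a finite sum of indicator functions of $\nu$-measurable sets) and that the exceptional set $Z$ is a \emph{finite} union of null sets, so that the full strength of \eqref{AssIntersections} — stated for all $F$ with $|F| \geq p$, including infinite $F$ — is in fact used only for the finitely many $p$-element subsets of $\{1,\ldots,N\}$. Note also that the finiteness of $\nu(Y)$ is what makes the averaging step produce a meaningful bound; without it the statement is vacuous or false. When this lemma is invoked to prove Lemma \ref{LemmaHaystacksAnnihilators} one takes $(Y,\nu)=(\widehat{\Lambda},\tau)$, $A_n = L_{\lambda_n}^\perp$ and $p=r$, with the hypothesis \eqref{AssIntersections} supplied by Remark \ref{RmkHaystack} together with $\tau(\rat(\Lambda))=0$.
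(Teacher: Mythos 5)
Your proof is correct and is essentially the paper's argument run in the contrapositive direction: the paper assumes all $\nu(A_n)\geq p\,\nu(Y)/N$ and derives that $\{\sum_{n=1}^N\chi_{A_n}\geq p\}$ has positive measure, hence some $p$-fold intersection is non-null, whereas you observe directly that this level set is a finite union of null sets and integrate. Both are the same double-counting/pigeonhole idea; your version even yields the marginally sharper bound $\nu(A_n)\leq (p-1)\,\nu(Y)/N$.
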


\begin{proof}
We can without loss of generality assume that $\nu(Y) = 1$. Let us fix a positive integer $N$, and let $(A_n)$ be a sequence of $\nu$-measurable sets in $Y$ which satisfies \eqref{AssIntersections}. We assume, for the sake of contradiction, that
\[
\nu(A_n) \geq \frac{p}{N}, \quad \textrm{for all $1 \leq n \leq N$}.
\]
Then, 
\[
\int_Y \left( \frac{1}{N} \sum_{n=1}^N \chi_{A_n} \right) \, d\nu \geq \frac{p}{N},
\]
and thus the set
\[
C_N = \left\{ x \in X \, : \, \sum_{n=1}^N \chi_{A_n}(x) \geq p \right\}
\]
has positive $\nu$-measure. Define $\cF_{p,N} = \{ F \subseteq \{1,\ldots,N\} \, : \, |F| \geq p \}$ and note that
\begin{equation}
\label{union}
C_N = \bigcup_{F \in \cF_{p,N}} C_{N,F},
\end{equation}
where
\[
C_{N,F} = \{ x \in X \, : \, x \in A_n, \enskip \textrm{for all $n \in F$} \} = \bigcap_{n \in F} A_n.
\]
Since we assume that $\nu(C_N) > 0$, it follows from \eqref{union} that there must be at least one subset $F \subset \{1,\ldots,N\}$ with $|F| \geq p$ such that 
\[
\nu(C_{N,F}) = \nu\Big( \bigcap_{n \in F} A_n \Big) > 0,
\]
which contradicts \eqref{AssIntersections}.
\end{proof}

\section{Rational spectrum and ergodic decomposition}
\label{sec:RatSpec}

Let $\Lambda$ be a free abelian group of rank $r$ and let $(X,\mu)$ be an ergodic $\Lambda$-space. Let $B \subset X$ be a $\mu$-measurable set with positive $\mu$-measure, and let $\Lambda(n) = n \cdot \Lambda$ for $n \geq 1$. Note that $\Lambda(n)$
has finite index in $\Lambda$. \\

In the previous section we have seen that we can achieve significant directional expansion of the set $B$ if $\widetilde{\sigma}_B(\rat(\Lambda) \setminus \{1\})$ is small enough. However, for the applications to volume spectra that we have in mind, this is not a natural assumption. The aim of this section is to show that upon passing to a small enough finite index subgroup $\Lambda_o$ of $\Lambda$, we can always select a good ergodic component $\nu_o$ of the sub-action of $\Lambda_o$, for which the normalized spectral measure $\widetilde{\sigma}_{\nu_o,B}(\rat(\Lambda_o) \setminus \{1\})$ of 
the rational spectrum is small. \\

The exact statement reads as follows.

\begin{proposition}[Shrinking the rational spectrum]
\label{PropositionShrinkingRationalSpec}
For every $\eps_o > 0$, there exist an integer $n$, a positive constant $c$, and a $\Lambda(n)$-invariant and $\Lambda(n)$-ergodic probability measure $\nu$ on $X$ such that either
\[
\nu(B) \geq 1/3 \quad \textrm{or} \quad \mu(B) < 3 \cdot \nu(B)
\] 
and
\[
\widetilde{\sigma}_{\nu,B}(\rat(\Lambda(n)) \setminus \{1\}) < \eps_o,
\]
and
\[
\mu\Big(\bigcap_{\lambda \in F} \lambda.B \Big)
\geq
c \cdot \nu\Big(\bigcap_{\lambda \in F} \lambda.B  \Big),
\]
for every $F \subset \Lambda(n)$.
\end{proposition}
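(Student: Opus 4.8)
The plan is to set things up so that we can iterate a ``descent'' on a suitable quantity. First I would fix $\eps_o > 0$ and, for a finite-index subgroup $\Lambda' < \Lambda$, consider the ergodic decomposition $\mu = \int \nu \, d\mathbb{P}(\nu)$ of $\mu$ with respect to the $\Lambda'$-action; this is a standard object since $\Lambda'$ has finite index, so there are only finitely many ergodic components (up to null sets) and they are permuted transitively by $\Lambda/\Lambda'$, whence each satisfies $\nu(B) \leq [\Lambda:\Lambda'] \cdot \mu(B)$ and $\sum \nu(B) \, \mathbb{P}(\{\nu\}) = \mu(B)$, so there is always an ergodic component $\nu$ with $\nu(B) \geq \mu(B)$. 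Using total invariance of $B$ under the trivial group one also gets, for such $\nu$, the inequality $\mu(\bigcap_{\lambda \in F} \lambda.B) \geq c \cdot \nu(\bigcap_{\lambda \in F} \lambda.B)$ with $c = \mathbb{P}(\{\nu\}) > 0$ for all $F \subset \Lambda'$, since $\mu$-measure of any event dominates $\mathbb{P}(\{\nu\})$ times its $\nu$-measure. This disposes of the ``comparability'' and ``domination'' clauses regardless of which finite-index subgroup we use; the real content is the spectral clause.

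For the spectral clause I would argue by contradiction combined with a monotonicity observation. Note that $\rat(\Lambda(n))$ is an increasing union (over multiples of $n$) of annihilators $\Lambda(m)^\perp$, and the key point is: if we pass from $\Lambda$ to $\Lambda(n)$, the spectral measure $\sigma_{\nu,B}$ of an ergodic component ``spreads out'' over the larger torus $\widehat{\Lambda(n)}$, which covers $\widehat{\Lambda}$ finitely-to-one. Concretely, I expect to track the quantity
\[
\beta(\Lambda') = \sup_{\nu} \, \widetilde{\sigma}_{\nu,B}\big(\rat(\Lambda') \setminus \{1\}\big),
\]
the supremum over ergodic components $\nu$ of the $\Lambda'$-action with $\nu(B) \geq \mu(B)$, and show that if $\beta(\Lambda(n)) \geq \eps_o$ for \emph{every} $n$, then one can extract an infinite nested sequence of finite-index subgroups $\Lambda \supset \Lambda(n_1) \supset \Lambda(n_1 n_2) \supset \cdots$ and ergodic components carrying at least a fixed mass $\eps_o/2$ on strictly growing finite pieces of the rational spectrum that are disjoint from $\{1\}$, eventually forcing $\widetilde{\sigma}_{\nu,B}$ to have infinite total mass (or, more carefully, forcing $\mu(B)$ to be arbitrarily small, using Lemma \ref{LemmaSpectralMeas} which ties $\sigma_{\nu,B}(L_\lambda^\perp)$ to a conditional-expectation $L^2$-norm bounded by $\nu(B)$). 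Lemma \ref{LemmaFactsFAGs} is what lets us interleave an \emph{arbitrary} finite-index subgroup with the cofinal family $\Lambda(n)$, so there is no loss in restricting to the subgroups $\Lambda(n)$.

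The execution requires one more ingredient: relating $\sigma_{\nu,B}$ for a $\Lambda(n)$-ergodic component $\nu$ to the behaviour of $\sigma_{\mu,B}$ near $\rat(\Lambda)$. Here I would use that $\widehat{\Lambda(n)} \simeq \widehat{\Lambda}$ but with the pairing rescaled, so that $\rat(\Lambda(n)) = \rat(\Lambda)$ as subsets of the dual torus (every finite-index subgroup of $\Lambda$ contains some $\Lambda(N)$, and conversely), and the decomposition $\mu = \int \nu\,d\mathbb{P}$ yields $\sigma_{\mu,B} = \int \sigma_{\nu,B}\, d\mathbb{P}(\nu)$ plus a correction concentrated on $\rat(\Lambda(n))$ coming from the non-ergodicity of $\Lambda(n)$ on $(X,\mu)$; shrinking the ``non-ergodic part'' of the rational spectrum is exactly shrinking that correction. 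The main obstacle, I expect, is making the averaging/iteration quantitative enough: one must choose the multiple $n$ so that the finite-factor part of $\widetilde{\sigma}_{\nu,B}$ below ``level $n$'' is already absorbed into the $\{1\}$-atom after renormalization, and control that this does not reintroduce mass from higher levels — in other words, proving that the ``residual'' rational mass can be made $< \eps_o$ and does not merely get pushed further out. This is the step I would budget the most care for; the measure-theoretic bookkeeping (ergodic decomposition over a finite-index subgroup, $\sigma$-additivity of $\rat(\Lambda)$ as a countable increasing union of compact annihilators) is routine by comparison.
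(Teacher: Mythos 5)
Your overall architecture matches the paper's: decompose $\mu$ into finitely many $\Lambda(n)$-ergodic components permuted by $\Lambda/\Lambda(n)$, get the domination clause from the weight of the chosen component, and make the rational mass small by observing that the sets $\pi_{m!}^{-1}\bigl(\rat(\Lambda(m!))\setminus\{1\}\bigr)$ decrease to the empty set (via Lemma \ref{LemmaFactsFAGs}) so that continuity from above of the finite measure $\widetilde{\sigma}_{\mu,B}$ applies. But the step you defer -- ``making the averaging/iteration quantitative'' -- is precisely the content of the proof, and your sketch does not contain the idea that closes it. The difficulty is the renormalization: $\widetilde{\sigma}_{\nu,B}=\sigma_{\nu,B}/\nu(B)^2$ while $\widetilde{\sigma}_{\mu,B}=\sigma_{\mu,B}/\mu(B)^2$, and the exact identity is $(\pi_n)_*\sigma_{\mu,B}=\sum_{\nu}\alpha(\nu)\,\sigma_{\nu,B}$ (there is no ``correction term''; the pushforward is exactly the weighted average). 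Bounding $\sigma_{\nu,B}\le \alpha(\nu)^{-1}(\pi_n)_*\sigma_{\mu,B}$ is useless because the number of components is $n^r$ and the minimal weight $\alpha(\nu)$ can tend to $0$ as $n\to\infty$, so you get no bound uniform in $n$. The paper resolves this by the Cauchy--Schwarz inequality $\mu(B)^2\le\sum_\nu\alpha(\nu)\nu(B)^2$, which converts the identity into $(\pi_n)_*\widetilde{\sigma}_{\mu,B}\ge\sum_\nu\beta(\nu)\,\widetilde{\sigma}_{\nu,B}$ for the reweighted probability $\beta(\nu)\propto\alpha(\nu)\nu(B)^2$, and then applies Markov's inequality \emph{twice} (once to the rational mass, once to $1/\nu(B)$) to find one component in the complement of both bad sets. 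Your proposed alternative -- an infinite descent accumulating mass $\eps_o/2$ on growing pieces of the rational spectrum -- would have to relate spectral measures of components at different levels $n$, where the same renormalization problem recurs at every stage; as written it does not produce a contradiction.

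A second, related gap: you dispose of the clause ``$\nu(B)\ge 1/3$ or $\mu(B)<3\nu(B)$'' by choosing the component with $\nu(B)\ge\mu(B)$, and treat the spectral clause separately. But the proposition requires a \emph{single} $\nu$ satisfying both, and the component maximizing $\nu(B)$ need not be the one with small normalized rational spectrum. This coupling is exactly why the paper's statement has the weaker disjunction with the constant $3$: it is what survives the double Markov argument. (Minor point: your assertion that $\rat(\Lambda(n))=\rat(\Lambda)$ ``as subsets of the dual torus'' conflates $\widehat{\Lambda}$ with $\widehat{\Lambda(n)}$; the correct and relevant statement is $\pi_n^{-1}(\rat(\Lambda(n)))=\rat(\Lambda)$ while $\pi_n^{-1}(\{1\})=\Lambda(n)^{\perp}$ grows with $n$, which is the mechanism that makes $\pi_n^{-1}(\rat(\Lambda(n))\setminus\{1\})$ shrink.)
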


\begin{remark}
We stress that the integer $n$, the measure $\nu$ and the constant $c$ will in general depend on the set $B$.
\end{remark}

Proposition \ref{PropositionShrinkingRationalSpec} will be a consequence of the following technical lemma, whose proof will be given at the end of this section. 
To explain its statement, note that if $\Lambda_o < \Lambda$ is any subgroup, then
there is always a continuous restriction homomorphism
\[
\pi_o : \widehat{\Lambda} \ra \widehat{\Lambda}_o, \enskip \xi \mapsto \xi|_{\Lambda_o}.
\]
In particular, $\pi_o^{-1}(\rat(\Lambda_o)) \subset \rat(\Lambda)$.

\begin{lemma}[Rational spectrum and finite-index subgroups]
\label{LemmaBehaviourErgCpts}
For every finite-index subgroup $\Lambda_o < \Lambda$ there exist a $\Lambda_o$-invariant and $\Lambda_o$-ergodic probability measure $\nu$ on $X$ and a positive constant $c$ such that either 
\[
\nu(B) \geq 1/3 \quad \textrm{or} \quad \mu(B) < 3 \cdot \nu(B)
\]
and
\[
\widetilde{\sigma}_{\nu,B}\big(\rat(\Lambda_o) \setminus \{1\}\big) \leq 3 \cdot 
\widetilde{\sigma}_{\mu,B}\big(\pi_o^{-1}\big(\rat(\Lambda_o) \setminus \{1\}\big)\big)  
\]
and
\[
\mu\Big(\bigcap_{\lambda \in F} \lambda.B \Big)
\geq
c \cdot \nu\Big(\bigcap_{\lambda \in F} \lambda.B  \Big),
\]
for every $F \subset \Lambda_o$. 
\end{lemma}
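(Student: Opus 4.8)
The plan is to pass to the $\Lambda_o$-ergodic decomposition of $\mu$ and then extract one good component by an elementary averaging argument; the three required properties are read off, respectively, from the decomposition itself, from the uniqueness part of Bochner's theorem, and from this averaging.

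Since $\Lambda$ is abelian, $\Lambda_o$ is normal, and since $d := [\Lambda:\Lambda_o] < \infty$ and $\mu$ is $\Lambda$-ergodic, the ergodic decomposition of $\mu$ for the $\Lambda_o$-action is \emph{finite}: one may write $\mu = \sum_{i=1}^k w_i\nu_i$ with $k \le d$, each $\nu_i$ a $\Lambda_o$-invariant and $\Lambda_o$-ergodic probability measure, $w_i > 0$, and $\sum_i w_i = 1$. (The quotient action of the finite abelian group $\Lambda/\Lambda_o$ on the space of $\Lambda_o$-ergodic components is ergodic; since the group is abelian, the stabiliser of a component is $\Lambda/\Lambda_o$-invariant, hence $\mu$-a.e.\ constant, so that space is a single finite orbit. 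One may in fact take $w_i = 1/k$, but only $w_i > 0$ is needed below.) With this in hand, the third assertion of the Lemma is immediate: for every index $i$ and every $F \subset \Lambda_o$, the inequality $\mu \ge w_i\nu_i$ of measures gives $\mu\big(\bigcap_{\lambda\in F}\lambda.B\big) \ge w_i\,\nu_i\big(\bigcap_{\lambda\in F}\lambda.B\big)$, so it will suffice to take $\nu = \nu_i$ and $c = w_i$ for the index $i$ selected below.

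Next I would relate the spectral measures. Let $\pi_o : \widehat{\Lambda} \to \widehat{\Lambda_o}$, $\xi\mapsto\xi|_{\Lambda_o}$, denote the continuous restriction homomorphism, and for each $i$ let $\sigma_{\nu_i,B}$ be the spectral measure of $B$ for the ergodic $\Lambda_o$-space $(X,\nu_i)$ (the zero measure if $\nu_i(B) = 0$). For every $\lambda \in \Lambda_o$ the measures $(\pi_o)_*\sigma_{\mu,B}$ and $\sum_{i=1}^k w_i\sigma_{\nu_i,B}$ on $\widehat{\Lambda_o}$ have the same $\lambda$-th Fourier coefficient, namely $\mu(B\cap\lambda.B)$ --- for the first because $\xi(\lambda) = (\pi_o\xi)(\lambda)$ and $\mu(B\cap\lambda.B) = \int_{\widehat\Lambda}\xi(\lambda)\,d\sigma_{\mu,B}(\xi)$, for the second because $\mu = \sum_i w_i\nu_i$ --- so, a finite Borel measure on the compact group $\widehat{\Lambda_o}$ being determined by its Fourier coefficients,
\[
(\pi_o)_*\sigma_{\mu,B} \;=\; \sum_{i=1}^k w_i\,\sigma_{\nu_i,B}.
\]
Write $\beta = \mu(B)$, $b_i = \nu_i(B)$, $A = \sigma_{\mu,B}\big(\pi_o^{-1}(\rat(\Lambda_o)\setminus\{1\})\big)$, and $a_i = \sigma_{\nu_i,B}(\rat(\Lambda_o)\setminus\{1\})$. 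Evaluating the identity above on $\rat(\Lambda_o)\setminus\{1\}$ gives $\sum_i w_i a_i = A$, and evaluating $\mu = \sum_i w_i\nu_i$ on $B$ gives $\sum_i w_i b_i = \beta$. By Lemma \ref{LemmaSpectralMeas}, $\sigma_{\mu,B}(\{1\}) = \beta^2$ and $\sigma_{\nu_i,B}(\{1\}) = b_i^2$ whenever $b_i > 0$, so $\widetilde{\sigma}_{\mu,B}\big(\pi_o^{-1}(\rat(\Lambda_o)\setminus\{1\})\big) = A/\beta^2$ and $\widetilde{\sigma}_{\nu_i,B}(\rat(\Lambda_o)\setminus\{1\}) = a_i/b_i^2$. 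It therefore suffices to exhibit an index $i$ with $b_i \ge \tfrac25\beta$ and $a_i/b_i^2 \le 3A/\beta^2$: then $\nu = \nu_i$ satisfies $\mu(B) = \beta < 3b_i = 3\nu(B)$ as well as the required spectral inequality.

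To find such $i$, set $J = \{\, i : b_i \ge \tfrac25\beta\,\}$. For $i \notin J$ we have $b_i < \tfrac25\beta$, so $\sum_{i\notin J}w_ib_i < \tfrac25\beta$ and hence $\sum_{i\in J}w_ib_i > \tfrac35\beta$ (in particular $J \ne \emptyset$). By Cauchy--Schwarz,
\[
\sum_{i\in J}w_i b_i^2 \;\ge\; \left(\sum_{i\in J}w_i\right)^{-1}\left(\sum_{i\in J}w_i b_i\right)^{2} \;\ge\; \left(\sum_{i\in J}w_i b_i\right)^{2} \;>\; \tfrac{9}{25}\beta^2,
\]
while $\sum_{i\in J}w_i a_i \le \sum_i w_i a_i = A$ and $A \ge 0$. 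Therefore $\sum_{i\in J}w_i\big(a_i - \tfrac{25A}{9\beta^2}b_i^2\big) \le A - \tfrac{25A}{9\beta^2}\cdot\tfrac{9}{25}\beta^2 = 0$, so some $i \in J$ has $a_i \le \tfrac{25A}{9\beta^2}b_i^2$, i.e.\ $a_i/b_i^2 \le \tfrac{25}{9}\cdot A/\beta^2 \le 3A/\beta^2$; and $i \in J$ means $b_i \ge \tfrac25\beta$. Taking $\nu = \nu_i$ and $c = w_i$ finishes the argument. The only step beyond routine bookkeeping is the finiteness of the $\Lambda_o$-ergodic decomposition; everything else is Bochner uniqueness together with the choice of the threshold $\tfrac25$ (any value in $[\tfrac13,\,1-\tfrac1{\sqrt3}]$ works) so that the resulting constant $\tfrac{25}{9}$ stays below $3$.
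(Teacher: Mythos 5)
Your proof is correct, and it follows the same skeleton as the paper's: pass to the (finite) $\Lambda_o$-ergodic decomposition $\mu=\sum_i w_i\nu_i$, read off the third inequality from $\mu\ge w_i\nu_i$, and use uniqueness of Fourier coefficients to get $(\pi_o)_*\sigma_{\mu,B}=\sum_i w_i\sigma_{\nu_i,B}$ --- all of which appears verbatim in the paper. The only genuine divergence is the final selection of a good component. The paper first applies H\"older to get $\mu(B)^2\le\sum_i w_i\nu_i(B)^2$, tilts to the probability weights $\beta_i\propto w_i\nu_i(B)^2$, and then runs Markov's inequality twice (each with threshold $1/3$) against the two quantities $\widetilde{\sigma}_{\nu_i,B}(\rat(\Lambda_o)\setminus\{1\})$ and $1/\nu_i(B)$, treating the degenerate case $A=0$ separately by taking the component maximizing $\nu(B)$. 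You instead restrict to the index set $J=\{i:\nu_i(B)\ge\tfrac25\mu(B)\}$, use Cauchy--Schwarz on $J$ to bound $\sum_{i\in J}w_i\nu_i(B)^2$ from below, and conclude with a single averaging; this uniformly handles $A=0$ and yields the marginally stronger conclusions $\mu(B)<3\nu(B)$ unconditionally and the constant $25/9$ in place of $3$. Both selection arguments are elementary double counting of comparable length, so neither buys much over the other. One small caveat: your parenthetical justification that the decomposition is finite (``the stabiliser of a component is $\Lambda/\Lambda_o$-invariant, hence a.e.\ constant, so that space is a single finite orbit'') is not quite a proof --- constancy of the stabiliser does not by itself force a single orbit. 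The correct (and standard) argument, which the paper uses, is simply that an ergodic invariant probability measure for an action of the \emph{finite} group $\Lambda/\Lambda_o$ is supported on a single orbit, since each orbit is a measurable invariant set; the fact you need is true, but you should state that reason rather than the stabiliser remark.
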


\subsection{Proof of Proposition \ref{PropositionShrinkingRationalSpec} assuming Lemma \ref{LemmaBehaviourErgCpts}}

Fix $\eps_o \geq 0$ and a $\mu$-measurable set $B \subset X$ with positive $\mu$-measure. By Lemma \ref{LemmaBehaviourErgCpts}, there are, for every $n$, a $\Lambda(n)$-invariant and $\Lambda(n)$-ergodic probability measure $\nu_n$ on $X$, and a positive constant $c_{n}$ such that either 
\[
\nu_n(B) \geq 1/3 \quad \textrm{or} \quad \mu(B) < 3 \cdot \nu_n(B)
\]
and
\[
\widetilde{\sigma}_{\nu_n,B}(\rat(\Lambda(n)) \setminus \{1\}) < 3 \cdot 
\widetilde{\sigma}_{\mu,B}(\pi_n^{-1}(\rat(\Lambda(n)) \setminus \{1\})) 
\]
and
\[
\mu\Big(\bigcap_{\lambda \in F} \lambda.B  \Big) \geq c_{n} \cdot \nu_n\Big(\bigcap_{\lambda \in F} \lambda.B  \Big),
\]
for every $F \subset \Lambda(n)$, where $\pi_n : \widehat{\Lambda} \ra \widehat{\Lambda(n)}$ denotes the restriction map $\xi \mapsto \xi|_{\Lambda(n)}$. It thus suffices to show that we can find $n$ such that
\[
\widetilde{\sigma}_{\mu,B}(\pi_n^{-1}(\rat(\Lambda(n)) \setminus \{1\})) \leq \eps_o/3,
\]
in which case the theorem holds with $c = c_{n}$ and $\nu = \nu_n$. \\
 
Since $\widetilde{\sigma}_{\mu,B}$ is a finite non-negative measure on $\widehat{\Lambda}$, and the sequence of sets 
\[
A_m = \pi_{m!}^{-1}(\rat(\Lambda(m!)) \setminus \{1\}) \subset \widehat{\Lambda}, \quad m \geq 1,
\]
is decreasing, it is enough to prove that
\begin{equation}
\label{emptyintersection}
A_\infty := \bigcap_{m=1}^\infty A_m = \emptyset.
\end{equation}
Assume, for the sake of contradiction, that this intersection is non-empty, and 
pick an element $\xi \in A_\infty$. Then, for every $m \geq 1$, we have $\xi|_{\Lambda(m!)} \neq 1$, and there is a finite-index subgroup $\Lambda_m \subset \Lambda(m!)$ such that $\xi|_{\Lambda_m} = 1$. By Lemma \ref{LemmaFactsFAGs}, there
exists an integer $N$ such that $\Lambda(N) < \Lambda_m$. However, this implies that 
$\xi|_{\Lambda(N!)} = 1$ (or equivalently, $\xi \notin A_N$), which is a contradiction to the assumption that $\xi \in A_\infty$.

\subsection{Proof of Lemma \ref{LemmaBehaviourErgCpts}}

Let $\Lambda_o < \Lambda$ be a finite-index subgroup. Since $\mu$ is $\Lambda$-invariant and $\Lambda$-ergodic,
there exists, by \cite[Theorem 8.20]{EW}, a $\Lambda$-invariant and $\Lambda$-ergodic probability measure $\alpha$ on the (weak*) Borel set 
$\Prob^{\textrm{erg}}_{\Lambda_o}(X) \subset \Prob(X)$ such that
\[
\mu = \int_{\Prob_{\Lambda_o}^{\textrm{erg}}(X)} \nu \, d\alpha(\nu).
\]
Since $\Lambda_o$ acts trivially on $\Prob^{\textrm{erg}}_{\Lambda_o}(X)$, the action of $\Lambda$ on this space descends to an action by the finite group $\Lambda/\Lambda_o$. Since $\alpha$ is ergodic with respect to this action, we see that the support of $\alpha$ consists of a single $\Lambda/\Lambda_o$-orbit. In particular, the support of $\alpha$ is finite, so we conclude that there is a finite set $Q \subset \Prob_{\Lambda_o}^{\textrm{erg}}(X)$ with $|Q| \leq |\Lambda/\Lambda_o|$ and a strictly positive function $\alpha : Q \ra [0,1]$ such that
\begin{equation}
\label{ergcpt}
\sum_{\nu \in Q} \alpha(\nu) = 1 \qand \mu = \sum_{\nu \in Q} \alpha(\nu) \nu.
\end{equation}
Since $\mu(B) > 0$, the set $Q_B = \{ \nu \in Q \, : \, \nu(B) > 0 \}$ is non-empty.
Furthermore, 
\begin{equation}
\label{defc}
c := \min\{ \alpha(\nu) \, : \, \nu \in Q_B\} > 0,
\end{equation}
and for every $\nu \in Q_B$, it follows from \eqref{ergcpt} that
\begin{equation}
\mu(B') \geq c \cdot \nu(B') \quad \textrm{for every $\mu$-measurable $B' \subset B$}. 
\end{equation}
In particular, if $F \subset \Lambda_o$ and $\lambda_o \in F$, we can write
\[
\bigcap_{\lambda \in F} \lambda.B = \lambda_o.B', \quad \textrm{where 
$B'= \hspace{-0.3cm} \bigcap_{\lambda \in F-\lambda_o} \hspace{-0.2cm}\lambda.B \subset B$},
\]
and thus, for every $\nu \in Q_B$, using that both $\mu$ and $\nu$ are $\Lambda_o$-invariant, we see that
\begin{align}
\mu\Big(\bigcap_{\lambda \in F} \lambda.B\Big) 
&= 
\mu(\lambda_o.B') = \mu(B') \geq c \cdot \nu(B') \nonumber \\
&= c \cdot \nu(\lambda_o.B') = c \cdot \nu\Big(\bigcap_{\lambda \in F} \lambda.B\Big). \label{intersectionmunu}
\end{align} 
Note that for every $\lambda \in \Lambda_o$, 
\begin{align*}
\mu(B \cap \lambda.B) 
&= 
\int_{\widehat{\Lambda}} \xi(\lambda) \, d\sigma_{\mu,B}(\xi)
= \int_{\widehat{\Lambda}} \xi|_{\Lambda_o}(\lambda) \, d\sigma_{\mu,B}(\xi) \\[0.2cm]
&=
\int_{\widehat{\Lambda}_o} \eta(\lambda) \, d(\pi_o)_*\sigma_{\mu,B}(\eta),
\end{align*}
and
\begin{align*}
\mu(B \cap \lambda.B) 
&=
\sum_{\nu \in Q_B} \alpha(\nu) \cdot \nu(B \cap \lambda.B) = \sum_{\nu \in Q_B} \alpha(\nu) \cdot \int_{\widehat{\Lambda}_o} \eta(\lambda) \, d\sigma_{\nu,B}(\eta) \\[0.2cm]
&=
\int_{\widehat{\Lambda}_o} \eta(\lambda) d\sigma'(\eta), \quad \textrm{where $\sigma' = \sum_{\nu \in Q_B} \alpha(\nu) \cdot \sigma_{\nu,B}$}.
\end{align*}
Since finite measures on $\widehat{\Lambda}_o$ are uniquely determined by their Fourier transforms, we conclude that $(\pi_o)_*\sigma_{\mu,B} = \sigma'$, that is to say, 
\begin{equation}
\label{pioid}
(\pi_o)_*\sigma_{\mu,B} = \sum_{\nu \in Q_B} \alpha(\nu) \cdot \sigma_{\nu,B}. 
\end{equation}
In particular,
\[
\sigma_{\mu,B}(\pi_o^{-1}(\rat(\Lambda_o) \setminus \{1\})) = 0 \implies
\widetilde{\sigma}_{\nu,B}(\rat(\Lambda_o) \setminus \{1\}) = 0, \quad \textrm{for 
all $\nu \in Q_B$}.
\]
Hence, if pick any $\nu \in Q_B$ such that $\nu(B)$ is maximal, then $\mu(B) \leq \nu(B)$ and $\nu$ will satisfy all of the properties asserted in the lemma. \\

Let us from now on assume that $\widetilde{\sigma}_{\mu,B}(\pi_o^{-1}(\rat(\Lambda_o) \setminus \{1\})) > 0$. First note that
\begin{align}
\mu(B)^2 &= \left( \sum_{\nu \in Q_B} \alpha(\nu) \nu(B) \right)^2 \leq \left(\, \sum_{\nu \in Q_B} \alpha(\nu) \right) \cdot \left( \sum_{\nu \in Q_B} \alpha(\nu) \nu(B)^2 \right) \nonumber \\[0.2cm]
&\leq \sum_{\nu \in Q_B} \alpha(\nu) \nu(B)^2, \label{holder}
\end{align}
by H\"older's inequality. Hence, by \eqref{pioid} and \eqref{holder},
\begin{align}
(\pi_o)_*\widetilde{\sigma}_{\mu,B}
&=
\frac{(\pi_o)_*\sigma_{\mu,B}}{\mu(B)^2} 
=
\frac{\sum_{\nu \in Q_B} \alpha(\nu) \, \sigma_{\nu,B}}{\mu(B)^2} 
\geq \frac{\sum_{\nu \in Q_B} \alpha(\nu) \, \sigma_{\nu,B}}{ \sum_{\nu \in Q_B} \alpha(\nu) \nu(B)^2} \nonumber \\[0.2cm]
&=
\frac{\sum_{\nu \in Q_B} \alpha(\nu) \nu(B)^2 \cdot  \widetilde{\sigma}_{\nu,B}}{ \sum_{\nu \in Q_B} \alpha(\nu) \nu(B)^2} = \sum_{\nu \in Q_B} \beta(\nu) \, \widetilde{\sigma}_{\nu,B},
\label{normspecmeasmunu}
\end{align}
where $\beta : Q_B \ra [0,1]$ denotes the probability measure
\[
\beta(\nu) = \frac{\alpha(\nu) \nu(B)^2}{ \sum_{\nu' \in Q_B} \alpha(\nu') \nu'(B)^2}, \quad \nu \in Q_B,
\]
and the inequality in \eqref{normspecmeasmunu} is interpreted in the sense of non-negative measures on $\widehat{\Lambda}_o$. In particular, if we apply \eqref{normspecmeasmunu} to the sets $\rat(\Lambda_o) \setminus \{1\}$ and $\widehat{\Lambda}_o$ respectively, we get the inequalities
\begin{equation}
\label{ineqrat}
\widetilde{\sigma}_{\mu,B}(\pi_o^{-1}(\rat(\Lambda_o) \setminus \{1\}))
\geq \sum_{\nu \in Q_B} \beta(\nu) \widetilde{\sigma}_{\nu,B}(\rat(\Lambda_o) \setminus \{1\}),
\end{equation}
and, 
\begin{equation}
\label{ineqinvmunu}
\widetilde{\sigma}_{\mu,B}(\pi_o^{-1}(\widehat{\Lambda}_o)) = \widetilde{\sigma}_{\mu,B}(\widehat{\Lambda}) = \frac{1}{\mu(B)} \geq \sum_{\nu \in Q_B} \beta(\nu) \widetilde{\sigma}_{\nu,B}(\widehat{\Lambda}_o) = \sum_{\nu \in Q_B} \frac{\beta(\nu)}{\nu(B)},
\end{equation}
since, by Lemma \ref{LemmaSpectralMeas}, we have
\[
\widetilde{\sigma}_{\mu,B}(\widehat{\Lambda}) = \frac{\mu(B)}{\mu(B)^2} = \frac{1}{\mu(B)}
\qand 
\widetilde{\sigma}_{\nu,B}(\widehat{\Lambda}_o) = \frac{\nu(B)}{\nu(B)^2} = \frac{1}{\nu(B)}, \quad \textrm{for all $\nu \in Q_B$}.
\]
Let us now define the sets
\[
S_1 = \{ \nu \in Q_B \, : \, 
\widetilde{\sigma}_{\nu,B}(\rat(\Lambda_o) \setminus \{1\}) \geq 3 \cdot \widetilde{\sigma}_{\mu,B}(\pi_o^{-1}(\rat(\Lambda_o) \setminus \{1\})) \}
\]
and
\[
S_2 = \left\{ \nu \in Q_B \, : \, \frac{1}{\nu(B)} \geq \frac{3}{\mu(B)} \right\}.
\]
Then, since $\widetilde{\sigma}_{\mu,B}(\pi_o^{-1}(\rat(\Lambda_o) \setminus \{1\})) > 0$, the bounds \eqref{ineqrat} and \eqref{ineqinvmunu}, combined with Markov's inequality, tell us that
\[
\beta(S_1) \leq 1/3 \qand \beta(S_2) \leq 1/3,
\]
and thus $T := S_1^c \cap S_2^c$ is non-empty. For every $\nu \in T$, we have
\[
\widetilde{\sigma}_{\nu,B}(\rat(\Lambda_o) \setminus \{1\}) < 3 \cdot \widetilde{\sigma}_{\mu,B}(\pi_o^{-1}(\rat(\Lambda_o) \setminus \{1\}))
\qand
\mu(B) < 3 \cdot \nu(B).
\]
The last inequality is only non-trivial if $\nu(B) < 1/3$ (the first inequality can still be non-trivial even if $\nu(B) \geq 1/3$ for all $\nu \in T$). We conclude that any $\nu$ in $T$ will satisfy the properties asserted in the lemma, and we are done.

\section{Proof of Theorem \ref{ThmIntroIntersections}}
\label{sec:ThmMainInt}

Let $\Lambda$ be a free abelian group of rank $r$ and let $(X,\mu)$ be an ergodic $\Lambda$-space. Let $B \subset X$ be a $\mu$-measurable set with positive $\mu$-measure. In this section, we will prove the following generalization of Theorem \ref{ThmIntroIntersections}.

\begin{theorem}
\label{ThmMainIntersections}
For every $p \geq 2$, there is a positive integer $n$, such that for every haystack $\cH \subset \cP_\Lambda$ and ergodic set $S \subset \bZ$, there exist $\lambda \in \cH$ and $m_1 \in S$ with the property that for all $\lambda_2,\ldots,\lambda_p \in \Lambda$, there are $m_2,\ldots,m_p \in S$ such that
\[
\mu\Big( B \cap m_1n\lambda.B \cap \Big(\bigcap_{k=2}^p (m_kn \lambda + n \lambda_k).B \Big)\Big) > 0.
\]
\end{theorem}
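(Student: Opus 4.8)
The plan is to combine the rational--spectrum reduction of Proposition \ref{PropositionShrinkingRationalSpec} with the directional expansion produced (via Lemmas \ref{LemmaExpansionSpectral} and \ref{LemmaHaystacksAnnihilators}) in the proof of Theorem \ref{ThmMainErg}, and then to convert expansion in one fixed direction into the required multiple intersections by a mean ergodic averaging argument along that direction. Fix $p \ge 2$ and choose $\eps_o > 0$ small, in terms of $p$ and $\mu(B)$ only (the precise threshold will emerge below). Apply Proposition \ref{PropositionShrinkingRationalSpec} to obtain an integer $n$, a constant $c > 0$, and a $\Lambda(n)$-invariant, $\Lambda(n)$-ergodic probability measure $\nu$ on $X$ with $\widetilde{\sigma}_{\nu,B}(\rat(\Lambda(n)) \setminus \{1\}) < \eps_o$, with $\beta := \nu(B) \ge \mu(B)/3 > 0$ (both alternatives in the proposition yield this), and with $\mu(\bigcap_{\lambda \in F}\lambda.B) \ge c\,\nu(\bigcap_{\lambda\in F}\lambda.B)$ for every finite $F \subset \Lambda(n)$. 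This $n$ is the integer claimed in the theorem, and from now on we work inside the ergodic $\Lambda(n)$-space $(X,\nu)$.

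Given a haystack $\cH \subset \cP_\Lambda$, note that $n\cH$ is a haystack in $\cP_{\Lambda(n)}$, since multiplication by $n$ is an isomorphism $\Lambda \to \Lambda(n)$ preserving primitivity and finite index. Write $\widetilde{\sigma}_{\nu,B} = \delta_1 + \alpha + \tau$ as in the proof of Theorem \ref{ThmMainErg}, with $\alpha$ supported on $\rat(\Lambda(n))\setminus\{1\}$ of mass $\le \eps_o$ and $\tau(\rat(\Lambda(n))) = 0$. Fixing $\eps > \eps_o$ small, Lemma \ref{LemmaHaystacksAnnihilators} applied to $\tau$ and the haystack $n\cH$ yields $\lambda \in \cH$ with $\tau(L_\ell^\perp) < \eps - \eps_o$, where $\ell := n\lambda$; hence $\widetilde{\sigma}_{\nu,B}(L_\ell^\perp) \le 1 + \eps_o + \tau(L_\ell^\perp) < 1+\eps$. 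Because every $\Lambda(n)$-translate $B'$ of $B$ satisfies $\sigma_{\nu,B'} = \sigma_{\nu,B}$ (the group being abelian and $\nu$ being $\Lambda(n)$-invariant), the bound $\widetilde{\sigma}_{\nu,B'}(L_\ell^\perp)<1+\eps$ holds for all such translates --- this is exactly what lets a single direction $\ell$, chosen now, control the sets $n\lambda_k.B$ the adversary introduces later. By Lemma \ref{LemmaSpectralMeas}, for such a translate $B'$ the function $g_{B'} := \bE_\nu[\chi_{B'}\mid\cE_{\bZ\ell}]$ has $\int_X g_{B'}^2\,d\nu = \sigma_{\nu,B'}(L_\ell^\perp) < (1+\eps)\beta^2$ and $\int_X g_{B'}\,d\nu = \beta$, so $\|g_{B'}-\beta\|_{L^2(\nu)}^2 < \eps\beta^2$ and, by Chebyshev, $\nu(\{g_{B'}\le\beta/2\}) < 4\eps$; moreover $g_{B'}$ is unchanged if $B'$ is translated further by an element of $\bZ\ell$, since $\cE_{\bZ\ell}$ is $\bZ\ell$-invariant.

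By the ergodic-set property of $S$ applied to the $\bZ\ell$-action, $\tfrac{1}{|S_N|}\sum_{m\in S_N}\chi_{m\ell.B'} \to g_{B'}$ in $L^2(\nu)$ along the averaging sequence $(S_N)$, for each translate $B'$. First, $\tfrac{1}{|S_N|}\sum_{m\in S_N}\nu(B\cap m\ell.B) = \int_X \chi_B\big(\tfrac{1}{|S_N|}\sum_m\chi_{m\ell.B}\big)\,d\nu \to \int_B g_B\,d\nu \ge \beta^2 - \|g_B-\beta\|_{L^2(\nu)}\sqrt\beta > \beta^2/2$ once $\eps$ is small, so, fixing $N$ large, there is $m_1 \in S_N\subset S$ with $\nu(A)\ge\beta^2/3$ for $A := B\cap m_1\ell.B$ --- and $\lambda,m_1$ depend only on $B$, as the quantifier order requires. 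Now let $\lambda_2,\dots,\lambda_p\in\Lambda$ be arbitrary, set $B_k := n\lambda_k.B$, and average over $(m_2,\dots,m_p)\in S_N^{p-1}$:
\[
\frac{1}{|S_N|^{p-1}}\sum_{(m_2,\dots,m_p)\in S_N^{p-1}}\nu\Big(A\cap\bigcap_{k=2}^p m_k\ell.B_k\Big) = \int_X \chi_A\prod_{k=2}^p\Big(\tfrac{1}{|S_N|}\sum_{m_k\in S_N}\chi_{m_k\ell.B_k}\Big)\,d\nu \longrightarrow \int_X \chi_A\prod_{k=2}^p g_{B_k}\,d\nu,
\]
the convergence holding in $L^1$ since each factor converges in $L^2$ and is bounded by $1$. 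As $\prod_{k=2}^p g_{B_k} > (\beta/2)^{p-1}$ off $\bigcup_{k=2}^p\{g_{B_k}\le\beta/2\}$, the limit is at least $(\beta/2)^{p-1}\big(\nu(A)-4(p-1)\eps\big) \ge (\beta/2)^{p-1}\big(\beta^2/3-4(p-1)\eps\big) > 0$ provided $\eps < \beta^2/(12(p-1))$; since $\beta\ge\mu(B)/3$, this --- together with the earlier ``$\eps$ small'' requirements --- holds once $\eps_o$ is chosen small in terms of $p$ and $\mu(B)$. Hence some $(m_2,\dots,m_p)\in S^{p-1}$ gives $\nu(A\cap\bigcap_{k=2}^p m_k\ell.B_k)>0$, i.e., since $m_k\ell.B_k=(m_kn\lambda+n\lambda_k).B$ and $A=B\cap m_1n\lambda.B$, one has $\nu\big(B\cap m_1n\lambda.B\cap\bigcap_{k=2}^p(m_kn\lambda+n\lambda_k).B\big)>0$; applying the domination inequality of Proposition \ref{PropositionShrinkingRationalSpec} with $F=\{0,\,m_1n\lambda\}\cup\{m_kn\lambda+n\lambda_k:2\le k\le p\}\subset\Lambda(n)$ gives $\mu$ of the same intersection $\ge c\cdot\nu(\cdots)>0$.

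The \textbf{main obstacle} is the quantifier order: $m_1$ must be fixed before the $\lambda_k$, yet the final intersection must stay positive whatever the $\lambda_k$ are. This is resolved by extracting in the first averaging step not merely a positive but a \emph{quantitatively} bounded-below value $\nu(A)\ge\beta^2/3$, with a bound depending only on $\mu(B)$; the adversary's sets $B_k=n\lambda_k.B$ then erode it by at most $4(p-1)\eps$ through the \emph{uniform} ``almost constant'' estimate $\nu(\{g_{B_k}\le\beta/2\})<4\eps$ --- uniform precisely because all translates of $B$ share the spectral measure $\sigma_{\nu,B}$. Choosing $\eps$ (hence $\eps_o$) small from the outset is legitimate since $\eps_o$ is picked before Proposition \ref{PropositionShrinkingRationalSpec} is invoked and only $p$ and $\mu(B)$ enter the threshold, which closes the argument. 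A secondary point to be checked carefully is that $n\cH$ is genuinely a haystack in $\Lambda(n)$ and that the direction obtained from Theorem \ref{ThmMainErg} for $B$ transfers verbatim to all its $\Lambda(n)$-translates; both are routine once the spectral-measure invariance under translation is recorded.
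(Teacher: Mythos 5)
Your proof is correct, and its skeleton is the same as the paper's: apply Proposition \ref{PropositionShrinkingRationalSpec} to get $n$, $c$ and the ergodic component $\nu$ with small rational spectrum and $\nu(B)\ge\mu(B)/3$; use the haystack machinery to pick one direction $\ell=n\lambda$; fix $m_1$ by an ergodic average of $\nu(B\cap m\ell.B)$; exploit $\Lambda(n)$-invariance to handle the adversarial $\lambda_k$; and transfer back to $\mu$ via the domination inequality. The only place you diverge is the end-game: the paper invokes Theorem \ref{ThmMainErg} to get $\nu(S\ell.B)>1-\eps$, forms the near-conull sets $(S\ell+n\lambda_k).B$ (whose $\nu$-measure is unchanged by the translation $n\lambda_k$), applies a union bound, and extracts $m_2,\dots,m_p$ from the resulting countable union of positive measure; you instead keep the spectral estimate $\widetilde{\sigma}_{\nu,B}(L_\ell^\perp)<1+\eps$, note that all $\Lambda(n)$-translates of $B$ share the same spectral measure, and run an $L^2$/Chebyshev bound on the conditional expectations $g_{B_k}$ followed by an average over tuples $(m_2,\dots,m_p)\in S_N^{p-1}$. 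The two finishes are equivalent in substance --- both rest on the translation-invariance of the relevant quantity under $\Lambda(n)$ --- and yours is slightly more quantitative but also more laborious; the paper's union bound gets there faster. Your handling of the quantifier order (choosing $\eps_o$ in terms of $p$ and $\mu(B)$ only, and fixing $\lambda,m_1$ before the $\lambda_k$ appear) matches the paper's.
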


\begin{proof}
Let us fix an integer $p \geq 2$, a $\mu$-measurable set $B \subset X$ with positive $\mu$-measure and $\eps_o > 0$ to be chosen later. By Proposition \ref{PropositionShrinkingRationalSpec}, 
we can find an integer $n$, a positive constant $c$ and a $\Lambda(n)$-invariant and $\Lambda(n)$-ergodic probability measure $\nu$ on $X$ such that either $\nu(B) \geq 1/3$
or $\mu(B) < 3 \cdot \nu(B)$, 
\begin{equation}
\label{lowbnd1}
\widetilde{\sigma}_{\nu,B}(\rat(\Lambda(n)) \setminus \{1\}) < \eps_o,
\end{equation}
and
\begin{equation}
\label{lowbnd2}
\mu\Big(\bigcap_{\lambda \in F} \lambda.B \Big)
\geq
c \cdot \nu\Big(\bigcap_{\lambda \in F} \lambda.B  \Big),
\end{equation}
for every $F \subset \Lambda(n)$. Fix a haystack $\cH \subset \cP_{\Lambda}$ and an ergodic set $S \subseteq \bZ$. Note that the set $\cH(n) := n \cdot \cH$ is a haystack in $\cP_{\Lambda(n)}$. Fix $\eps > \eps_o$. By Theorem \ref{ThmMainErg}, applied to the action $\Lambda(n) \acts (X,\nu)$, we can find $\lambda_\eps \in \cH(n)$ such that
\begin{equation}
\label{nulowbnd}
\nu(S\lambda_\eps.B) > 1-\eps. 
\end{equation}
Note that $\lambda_\eps = n\lambda$ for some $\lambda \in \cH$. We first claim that there exists $m_1 \in S$ such that
\begin{equation}
\label{bigint}
\nu(B \cap m_1\lambda_\eps.B) > \frac{\nu(B)^2}{2}.
\end{equation}
Indeed, since $S$ is an ergodic set, there is an increasing sequence $(S_N)$ of finite subsets of $S$, such that 
\[
\lim_{N \ra \infty} \frac{1}{|S_N|} \sum_{m \in S_N} \nu(B \cap m \lambda.B) = \int_{X} \bE_\nu[\chi_B \, | \, \cE_{\bZ \lambda_\eps}]^2 \, d\nu \geq \nu(B)^2,
\]
where $\cE_{\bZ \lambda_\eps}$ denotes the sub-$\sigma$-algebra of $\mathscr{B}_X$
consisting of $\nu$-almost $\bZ \lambda_\eps$-invariant sets. This readily implies \eqref{bigint}. Let us now fix a $(p-1)$-tuple $\lambda_2,\ldots,\lambda_p \in \Lambda$, and define the sets
\[
B_1 = B \cap m_1\lambda_\eps.B \qand B_k = (S\lambda_\eps + n\lambda_k).B, \quad \textrm{for $k=2,\ldots,p$}.
\]
Note that by \eqref{bigint} and \eqref{nulowbnd}, combined with the fact that 
$\nu$ is $\Lambda(n)$-invariant, we have
\[
\nu(B_1) > \frac{\nu(B)^2}{2} \qand \nu(B_k) = \nu(n\lambda_k.(S\lambda_\eps).B) > 1 - \eps
\]
Hence,
\begin{align*}
\nu(B_1 \cap B_2 \cap \ldots \cap B_p)
&=
1 - \nu(B_1^c \cup B_2^c \cup \ldots B_p^2) \\[0.2cm]
&> 
1 - \sum_{k=1}^p \nu(B_k^c) = \nu(B_1) - (p-1)\eps \\[0.2cm]
&> \frac{\nu(B)^2}{2} - (p-1)\eps,
\end{align*}
and the right-hand side is strictly positive if $\eps$ is chosen so that
\[
\eps < \frac{\nu(B)^2}{2(p-1)}.
\]
If $1/3 \leq \nu(B)$ or $\mu(B) < 3 \cdot \nu(B),$ then
\[
\frac{1}{18(p-1)} \leq \frac{\nu(B)^2}{2(p-1)} \quad \textrm{or} \quad \frac{\mu(B)^2}{18(p-1)} < \frac{\nu(B)^2}{2(p-1)}
\]
respectively. Hence, if we take $\eps_o < \eps < \mu(B)^2/18(p-1)$, then, in either case,
\[
\nu(B_1 \cap B_2 \cap \ldots \cap B_p) > 0.
\]
After unwrapping the definitions of the sets $B_1,\ldots,B_p$, we conclude that with these choices of $\eps_o$ and $\eps$, there exist $m_2,\ldots,m_p \in S$ such that
\[
\nu(B \cap m_1\lambda_\eps.B \cap (m_2 \lambda_\eps + n\lambda_2).B \cap \ldots \cap (m_p \lambda_\eps + n\lambda_p).B) > 0.
\]
By \eqref{lowbnd2}, applied to the set
\[
F = \{0,m_1 \lambda_\eps,m_2\lambda_\eps + n\lambda_2,\ldots,m_p \lambda_\eps + n\lambda_p\} \subset \Lambda(n),
\]
we see that
\[
\mu(B \cap m_1\lambda_\eps.B \cap (m_2 \lambda_\eps + n\lambda_2).B \cap \ldots \cap (m_p \lambda_\eps + n\lambda_p).B) > 0,
\]
which finishes the proof.
\end{proof}

\section{Proof of Theorem \ref{ThmIntoComb}}
\label{sec:PrfComb}

Let $\Lambda$ be a free abelian group of rank $r$. If $E \subset \Lambda$ and 
$\cF = (F_N)$ is a F\o lner sequence in $\Lambda$, we define the \emph{upper density $\overline{d}_{\cF}(E)$ along $\cF$} by
\[
\overline{d}_{\cF}(E) = \varlimsup_{N \ra \infty} \frac{|E \cap F_N|}{|F_N|}.
\]
If $\Lambda = \bZ^r$ and $\cF_N = [-N,N]^d$, we see that $\overline{d}_{\cF}$ coincides with the upper density introduced in the introduction. The \emph{upper Banach density $d_\Lambda^*(E)$} is given by
\[
d^*_{\Lambda}(E) = \sup\{ \overline{d}_{\cF}(E) \, : \, \textrm{$\cF$ is a F\o lner sequence in $\Lambda$}\}.
\]
In this section we prove Theorem \ref{ThmIntoComb} in the following form.

\begin{theorem}
\label{ThmMainComb}
Let $E \subset \Lambda$ such that $d^*_{\Lambda}(E) > 0$. Then there is a positive integer $n$ such that
for all $p \geq 2$, there exist $\lambda \in \cP_\Lambda$ and $m_1 \in \bZ$ with the property that for all $\lambda_2,\ldots,\lambda_p \in \Lambda$, there are $m_2,\ldots,m_p \in \bZ \setminus \{0\}$ such that 
\[
d^*_\Lambda(E \cap (E-m_1 n\lambda) \cap (E-(m_2 n\lambda + n\lambda_2)) \cap \ldots \cap (E-(m_p n\lambda + n\lambda_p))) > 0.
\]
In particular, there is an element $\lambda_o \in E$ such that
\[
\lambda_o + m_1 n \lambda \in E, \enskip \lambda_o + m_2 n \lambda + n \lambda_2 \in E, \ldots, \lambda_o + m_p n \lambda + n \lambda_p \in E.
\]
\end{theorem}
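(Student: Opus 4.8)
The plan is to pass from the combinatorial statement to the dynamical setting via Furstenberg's correspondence principle, taking care to extract the dilation $n$ \emph{before} the pattern size $p$ is named. Concretely, I would fix a F\o lner sequence $\cF=(F_N)$ with $\overline{d}_\cF(E)=d^*_\Lambda(E)$ and build from it an ergodic $\Lambda$-space $(X,\mu)$ and a $\mu$-measurable set $B\subset X$ with $\mu(B)=d^*_\Lambda(E)$ satisfying the correspondence inequality
\[
d^*_\Lambda\Big(\bigcap_{j}(E-g_j)\Big)\ \ge\ \mu\Big(\bigcap_{j}g_j.B\Big),\qquad g_j\in\Lambda .
\]
It then suffices to produce, for a single $n$ and every $p\ge 2$, a primitive $\lambda$ and an $m_1$ such that for all $\lambda_2,\dots,\lambda_p$ there are $m_2,\dots,m_p$ making the $\mu$-measure of $B\cap m_1n\lambda.B\cap\bigcap_{k=2}^p(m_kn\lambda+n\lambda_k).B$ positive; the displayed membership is then immediate from positivity of the Banach density.

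The decisive point is to fix $n$ once, depending only on $B$, so that on a suitable ergodic component the rational spectrum disappears \emph{entirely}. I would run the reduction of Section \ref{sec:RatSpec}, but aim for the sharp conclusion $\widetilde\sigma_{\nu,B}(\rat(\Lambda(n))\setminus\{1\})=0$ rather than merely ``small''. The mechanism is the monotone family $A_m=\pi_{m!}^{-1}(\rat(\Lambda(m!))\setminus\{1\})$, which decreases to $\emptyset$; I would choose $n$ and a $\Lambda(n)$-ergodic component $\nu$ (with $\nu(B)$ comparable to $\mu(B)$, and with the comparison $\mu(\bigcap_F \cdot)\ge c\,\nu(\bigcap_F\cdot)$ of Lemma \ref{LemmaBehaviourErgCpts}) landing in the vanishing case of that lemma. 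Once $\widetilde\sigma_{\nu,B}(\rat(\Lambda(n))\setminus\{1\})=0$, Theorem \ref{ThmMainErg} applies with $\eps_o=0$: for \emph{every} $\eps>0$, every haystack $\cH\subset\cP_{\Lambda(n)}$, and every ergodic set $S\subset\bZ$ there is a primitive direction $\lambda_\eps=n\lambda\in\cH$ with $\nu(S\lambda_\eps.B)>1-\eps$. Crucially the quality $\eps$ is no longer tied to $n$, so one fixed $n$ supports expansion of arbitrary strength.

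Granting the uniform expansion, the pattern for each $p$ is assembled exactly as in the proof of Theorem \ref{ThmMainIntersections}. Using the mean ergodic theorem along $S$, first fix $m_1\in S$ with $\nu(B\cap m_1n\lambda.B)>\tfrac12\nu(B)^2$, put $B_1=B\cap m_1n\lambda.B$ and $B_k=(Sn\lambda+n\lambda_k).B$ for $2\le k\le p$, and observe $\nu(B_k)>1-\eps$ by $\Lambda(n)$-invariance. A union bound yields
\[
\nu\Big(B_1\cap\bigcap_{k=2}^p B_k\Big)\ >\ \tfrac12\nu(B)^2-(p-1)\eps,
\]
which is positive as soon as $\eps<\nu(B)^2/\big(2(p-1)\big)$. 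Since $\eps$ may be chosen that small for the one fixed $n$, this holds for every $p$, and unwrapping the sets produces $m_2,\dots,m_p\in S$ with the desired positive $\nu$-measure. The comparison constant $c$ transfers positivity to $\mu$, and the correspondence inequality transfers it to $d^*_\Lambda$, completing the deduction.

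The hard part is the middle step: securing one dilation $n$ valid for all $p$ at once, which amounts to replacing ``$\widetilde\sigma_{\nu,B}(\rat(\Lambda(n))\setminus\{1\})$ small'' by ``$=0$''. If one only makes this quantity smaller than a positive $\eps_o$, then Theorem \ref{ThmMainErg} forces $\eps>\eps_o$, the union bound caps the admissible $p$ near $1+\nu(B)^2/(2\eps_o)$, and one recovers only the weaker statement in which $n$ depends on $p$. I therefore expect the genuine work to be spectral: showing that, after selecting the correct finite-index subgroup $\Lambda(n)$ and ergodic component, the atomic part of $\widetilde\sigma_{\mu,B}$ carried by $\rat(\Lambda)$ collapses onto the trivial character---equivalently, that all finite-factor obstructions seen by $B$ are absorbed at a bounded level by the single dilation $n$. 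Making this collapse rigorous, rather than the merely approximate shrinking of Proposition \ref{PropositionShrinkingRationalSpec}, is the crux on which the uniformity in $p$, and hence the precise quantifier ordering of the theorem, ultimately rests.
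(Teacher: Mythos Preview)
Your basic architecture---Furstenberg correspondence followed by the dynamical Theorem \ref{ThmMainIntersections}---is exactly what the paper does. The paper's proof is two lines: invoke the correspondence principle to get $(X,\mu,B)$ with $\mu(B)=d^*_\Lambda(E)$ and the inequality \eqref{corrprinciple}, then apply Theorem \ref{ThmMainIntersections} for each $p$ to obtain $n$, $\lambda$, $m_1$ and the $m_k$'s.

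You have correctly spotted something the paper glosses over: the statement of Theorem \ref{ThmMainComb} asserts a single $n$ valid \emph{for all} $p\ge 2$, whereas Theorem \ref{ThmMainIntersections} (and the paper's own proof, which reads ``for every $p\ge 2$, we can find a positive integer $n$\ldots'') only delivers $n=n(p)$. This matches Theorem \ref{ThmIntoComb} in the introduction, where $n$ is explicitly allowed to depend on $p$. So the discrepancy you noticed is real; the paper's proof establishes the $n=n(p)$ version, and the quantifier order in the statement of Theorem \ref{ThmMainComb} appears to be a slip.

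Your attempt to rescue the stronger quantifier by forcing $\widetilde\sigma_{\nu,B}(\rat(\Lambda(n))\setminus\{1\})=0$ exactly is where the proposal has a genuine gap. You do not prove this, and in fact it cannot hold in general: take $(X,\mu)$ to be the profinite completion $\widehat{\bZ}^{\,r}$ with the translation action and $B$ any small ball. Every ergodic component for $\Lambda(n)$ is again a rotation on a copy of $\widehat{\bZ}^{\,r}$, so its rational spectrum is the entire dual and $\sigma_{\nu,B}$ is supported there for every $n$. The decreasing family $A_m$ does shrink to $\emptyset$ pointwise, but $\widetilde\sigma_{\mu,B}(A_m)$ need not reach $0$ at any finite stage; Proposition \ref{PropositionShrinkingRationalSpec} gives arbitrarily small positive mass, not zero. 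So your proposed route to uniformity in $p$ does not go through, and the honest conclusion is the one the paper's proof actually yields: $n$ depends on $p$.
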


\subsection{Proof of Theorem \ref{ThmMainComb}}

Let $E \subset \Lambda$ such that $d^*_\Lambda(E) > 0$. By the classical Furstenberg's Correspondence Principle (see for instance \cite[Proposition A.4]{BF}), we can find an ergodic $\Lambda$-space $(X,\mu)$
and a $\mu$-measurable set $B \subset X$ such that $d^*_\Lambda(E) = \mu(B) > 0$ and 
\begin{equation}
\label{corrprinciple}
d_\Lambda^*\Big(\bigcap_{\lambda \in F} \big(E-\lambda \big) \Big)
\geq 
\mu\Big(\bigcap_{\lambda \in F} \lambda.B  \Big),
\end{equation}
for every finite subset $F \subset \Lambda$. By Theorem \ref{ThmMainIntersections}, 
for every $p \geq 2$, we can find a positive integer $n$, a primitive element $\lambda \in \Lambda$ and $m_1 \in \bZ \setminus \{0\}$ with the property that for every $(p-1)$-tuple 
$\lambda_2,\ldots,\lambda_p \in \Lambda$, there are $m_1,\ldots,m_p \in \bZ \setminus \{0\}$ such that
\[
\mu\Big(\bigcap_{\lambda \in F} \lambda.B  \Big) > 0,
\]
where
\[
F = \{0,m_1n\lambda,m_2n\lambda+n\lambda_2,\ldots,m_pn\lambda + n\lambda_p\}.
\]
By \eqref{corrprinciple}, we conclude that
\[
d^*_\Lambda(E \cap (E-m_1 n\lambda) \cap (E-(m_2 n\lambda + n\lambda_2)) \cap \ldots \cap (E-(m_p n\lambda + n\lambda_p))) > 0.
\]

\end{document}